\newtheorem{theorem}{Theorem}
\newtheorem*{theorem*}{Theorem}
\theoremstyle{definition}
\newtheorem{question}[theorem]{Question}
\newtheorem{lemma}[theorem]{Lemma}
\newtheorem{claim}[theorem]{Claim}
\theoremstyle{definition}
\newtheorem{definition}[theorem]{Definition}
\newcommand{\Z}{\mathbb{Z}}
\newcommand{\R}{\mathbb{R}}
\newcommand{\bGamma}{\boldsymbol{\Gamma}}
\newcommand{\bG}{\boldsymbol{\Gamma}}
\newcommand{\bP}{\boldsymbol{\Pi}}
\newcommand{\bD}{\boldsymbol{\Delta}}
\newcommand{\bS}{\boldsymbol{\Sigma}}
\newcommand{\I}{{\boldsymbol{I}}}
\newcommand{\II}{{\boldsymbol{I}\kern-0.05cm\boldsymbol{I}}}
\newcommand{\res}{\mathord \upharpoonright}
\newcommand{\abs}[1]{\left \lvert #1 \right \rvert}
\newcommand{\ignore}[1]{}
\newcommand{\modsim}{/\kern-0.1cm\sim}
\DeclareMathOperator{\dom}{dom}
\newcommand{\ww}{{\omega^{\omega}}}
\newcommand{\sm}{\setminus}
\newcommand{\conc}{{}^\smallfrown}
\newcommand{\tA}{\tilde{A}}
\newcommand{\gegd}{(\bG,E_G)\text{-determinacy}}
\newcommand{\ged}{(\bG,E)\text{-determinacy}}
\newcommand{\getd}{(\bG,E,T)\text{-determinacy}}
\newcommand{\setof}[1]{\left\{#1\right\}}
\newcommand{\suchthat}{\colon~}
\tikzset{
  % style to apply some styles to each segment of a path
  on each segment/.style={
    decorate,
    decoration={
      show path construction,
      moveto code={},
      lineto code={
        \path [#1]
        (\tikzinputsegmentfirst) -- (\tikzinputsegmentlast);
      },
      curveto code={
        \path [#1] (\tikzinputsegmentfirst)
        .. controls
        (\tikzinputsegmentsupporta) and (\tikzinputsegmentsupportb)
        ..
        (\tikzinputsegmentlast);
      },
      closepath code={
        \path [#1]
        (\tikzinputsegmentfirst) -- (\tikzinputsegmentlast);
      },
    },
  },
  % style to add an arrow in the middle of a path
  mid arrow/.style={postaction={decorate,decoration={
        markings,
        mark=at position .5 with {\arrow[#1]{stealth}}
      }}},
}
\title{Equivalence Relations and Determinacy}
\author{Logan Crone}
\address{Logan Crone, University of North Texas, Department of Mathematics, 1155 Union Circle
  \#311430, Denton, TX 76203-5017, USA}
\email{logancrone@my.unt.edu}
\author{Lior Fishman}
\address{Lior Fishman, University of North Texas, Department of Mathematics, 1155 Union Circle \#311430, Denton, TX 76203-5017, USA}
\email{lior.fishman@unt.edu}
\author{Stephen Jackson}
\address{Stephen Jackson, University of North Texas, Department of Mathematics, 1155 Union Circle \#311430, Denton, TX 76203-5017, USA}
\email{stephen.jackson@unt.edu}
\begin{document}
\maketitle

\begin{abstract}
We introduce the notion of $(\bG,E)$-determinacy for $\bG$ a pointclass and $E$
an equivalence relation on a Polish space $X$. A case of particular interest is the case
when $E=E_G$ is the (left) shift-action of $G$ on $S^G$ where $S=2=\{0,1\}$ or $S=\omega$.
We show that for all shift actions by countable groups $G$, and any ``reasonable''
pointclass $\bG$, that $\gegd$ implies $\bG$-determinacy. We also prove a corresponding result
when $E$ is a subshift of finite type of the shift map on $2^\Z$.
\end{abstract}

\section{Introduction}
For $X=2^\omega$ or $X=\ww$,  $E$ any equivalence relation on $X$,
and $\bG$ any pointclass (a collection of subsets of Polish spaces
closed under continuous preimages, the reader can consult
\cite{MoschovakisBook} and \cite{KechrisBook}
for background on the basic notions of  descriptive set theory
which we use throughout), there is a natural notion of $\ged$.
Namely, this asserts that any $A \subseteq X$ in $\bG$ which is $E$-invariant is determined.
Similarly, if $G$ is a countable group, and we fix an enumeration of $G$, 
then there is a natural notion of $\ged$ for sets $A\subseteq 2^G$ or $A\subseteq \omega^G$
in $\bG$ (under the natural identification of $2^G$ with $2^\omega$ via the enumeration
of $G$). We will give a more general definition of $\ged$ for arbitrary
Polish spaces $X$ and equivalence relations $E$ on $X$ in \S\ref{sec:ged}.
However, even the special cases just mentioned have risen in various contexts.
For example, when $E$ is the Turing equivalence relation on $2^\omega$, then the
question of when $\bG$ Turing-determinacy implies full $\bG$-determinacy has
been an important question in modern logic. Harrington \cite{Harrington1978}
showed that $\bS^1_1$ Turing-determinacy is equivalent to $\bS^1_1$-determinacy.
Woodin showed that in $L(\R)$ Turing determinacy implies full determinacy.
It is open in general for which pointclasses $\bG$ we have that
$\bG$ Turing-determinacy implies $\bG$-determinacy.

In another direction, in recent years arguments involving Borel
determinacy have had fruitful applications to the theory of Borel
equivalence relations.
The determinacy of Borel games is a fundamental result of Martin \cite{Martin1975},
\cite{Martin1985}. Despite the central significance of this result
in modern logic, this result has until recently found relatively few applications
as a proof technique. 
Recently, however, Marks~\cite{Marks2016} uses Borel
determinacy arguments to get lower-bounds on the Borel chromatic number $\chi_B$ for
free actions of free products of groups, in particular, the lower-bound
that $\chi_B(2^{F_n})\geq 2n+1$ for the chromatic number for the free part
of the shift action (defined below) of the free group $F_n$ on the
space $2^{F_n}$.
See also \cite{KechrisMarks} for a detailed account of recent advances
in the theory of descriptive graph combinatorics including the use of
Borel determinacy. 
There are currently no other proofs of this result, and so
the introduction of determinacy methods into the subject represents an important
connection.

In this paper we begin to investigate the general question of when
$\ged$ implies $\bG$-determinacy. Again, we formulate  the notion of
$\ged$ more generally for arbitrary Polish spaces and equivalence relations
in \S\ref{sec:ged}. First, however, we investigate the special case
where $X=2^G$ (or $X=\omega^G$) and $E$ is the equivalence relation induced by the shift action
of $G$ on $X$.

One of our main results is that for any countable group $G$
and any pointclass $\bG$ satisfying some reasonable closure properties (a ``reasonable pointclass'')
that $\gegd$ implies $\bG$-determinacy. The proof passes through a property of $G$
which we call {\em weak amenability} which may be of interest elsewhere. 
In \S\ref{sec:shift} and \S\ref{sec:wa} we introduce the notion of weak amenability and
establish some basic properties. In \S\ref{sec:shiftproof} we use weak amenabilty to give
a combinatorial proof of the determinacy result.

In \S\ref{sec:ged} we give the general definition of $\ged$ and then in \S\ref{sec:ss} we prove our
second main result that for equivalence relations $E$ induced by
subshifts of $2^\Z$ of finite type and reasonable pointclasses
$\bG$ that $\ged$ implies $\bG$-determinacy. The proof follows the outline of the
first main theorem, but has extra complications involving the combinatorics of the
subshift.

\section{Shift Actions} \label{sec:shift}

Let $G$ be a countable group. The (left) shift action of $G$ on $S^G$ is the action defined by
$g \cdot x(h)=x(g^{-1}h)$. The cases of primary interest are when $S=2=\{ 0,1\}$ and
$S=\omega$. In either of these cases, we refer to $S^G$ as the shift space, and note that
the action of $G$ on $S^G$ is continuous (with the usual product of the discrete
topologies on $S^G$). Let $E_G$ denote the equivalence relation on $S^G$ induced by
the shift action.

Let  $\pi \colon \omega \to G$ be a bijection, which we view as an enumeration of the group $G$,
$G=\{ \pi(0),\pi(1),\dots\}$. We also write $g_i$ for $\pi(i)$ to denote the $i$th group element.
The enumeration $\pi$ induces a homeomorphism, which we also call $\pi$,  between $S^\omega$ and $S^G$, namely
$\pi(x)=y$ where $y(g_n)=x(n)$.

\begin{definition}
Let $\bG$ be a pointclass. Let $G$ be a countable group and $S=2$ or $S=\omega$, and let $E_G$
be the shift equivalence relation on $S^G$. We say $\gegd$ holds if for all $A\subseteq S^G$
which are in $\bG$ and $E_G$-invariant we have that $\pi^{-1}(A)\subseteq S^\omega$
is determined, for all enumerations $\pi$ of $G$. 
\end{definition}

Our main theorem will require a mild closure hypothesis on the pointclass $\bG$
which we now state.

\begin{definition}
We say a pointclass $\bG$ is {\em reasonable} if
\begin{enumerate}
\item
$\bG$ is closed under unions and intersections with $\bD^0_6$ sets.
\item
$\bG$ is closed under substitutions by $\bD^0_6$-measurable functions.
\end{enumerate}
\end{definition}

We note that all levels of the Borel hierarchy past the finite levels are reasonable, as are
all levels of the projective hierarchy.

The next result is our main result connecting $\gegd$ with full $\bG$-determinacy.

\begin{theorem} \label{st}
For any countable group $G$ and any reasonable pointclass $\bG$,
$\gegd$  implies $\bG$-determinacy.
\end{theorem}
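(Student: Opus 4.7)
The plan is to reduce $\bG$-determinacy of an arbitrary set $A\subseteq\ww$ in $\bG$ to $\gegd$ by producing, from $A$, an $E_G$-invariant set $B\subseteq S^G$ that lies in $\bG$ and whose determinacy yields that of $A$. The overall shape of the argument parallels Harrington's classical reduction of Turing determinacy to full determinacy: build an auxiliary game whose payoff is automatically invariant under the given equivalence relation, then translate winning strategies back and forth.

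First, using the weak amenability of $G$ (to be developed in Sections~\ref{sec:shift} and \ref{sec:wa}), I would construct a map $\phi\colon S^G\to\ww$ which is $E_G$-invariant, i.e.\ $\phi(g\cdot y)=\phi(y)$ for every $g\in G$, and which is $\bD^0_6$-measurable. The intuition is that $\phi$ extracts a canonical real from the $E_G$-class of $y$ by means of a combinatorial decomposition of $G$ into finite ``tiles''; weak amenability is what supplies such a decomposition uniformly. Setting $B:=\phi^{-1}(A)$ yields an $E_G$-invariant set, and by the reasonable pointclass hypothesis---closure under $\bD^0_6$-measurable substitution, together with closure under intersection and union with $\bD^0_6$ sets---we obtain $B\in\bG$.

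Second, I apply $\gegd$ to $B$, obtaining a winning strategy $\sigma^*$ for some player, say Player~I, in the auxiliary game on $\pi^{-1}(B)\subseteq S^\omega$. I then convert $\sigma^*$ into a winning strategy $\sigma$ for Player~I in the original game on $A$ by simulating the auxiliary game internally: when Player~II plays $a_n\in\omega$ in the original game, Player~I translates $a_n$ into a block of $S$-moves attributed to Player~II in the auxiliary game according to a fixed protocol, consults $\sigma^*$ for Player~I's responses in that block, and reads off Player~I's next original-game move from the next coordinate of $\phi$ determined by the completed block.

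The main obstacle is the construction of $\phi$, since two opposing requirements must be reconciled: on one hand $\phi$ must factor through $E_G$, and on the other hand each player must be able, in finitely many $S$-moves, to drive the next coordinate of $\phi(y)$ to any prescribed integer regardless of the opponent's contributions. Weak amenability of $G$ is the combinatorial property precisely tailored for this: it should yield finite tiles in $G$ on which each player controls sufficiently many positions to steer $\phi$, while the opponent's positions within each tile are ``absorbed'' by the $E_G$-invariance. Once $\phi$ is in hand, checking that a winning strategy in the auxiliary game transfers to a winning strategy in the original game is routine bookkeeping with the simulation, relying on the fact that $\phi$ is $\bD^0_6$-measurable so that partial information about $\phi(y)$ becomes available after only finitely many auxiliary moves.
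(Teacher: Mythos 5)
Your high-level architecture (auxiliary invariant payoff, a decoding map built from a weak-amenability tiling, strategy transfer) matches the paper's, but two of its central points are resolved incorrectly, and the proof does not go through as written. First, the decoding map cannot be a \emph{total} $E_G$-invariant $\phi\colon S^G\to\ww$ with $B=\phi^{-1}(A)$. The decoding is necessarily partial: it is defined only on the set of plays in which both players successfully encode a real (in the paper, $\bigcap_n(R^\I_n\cap R^\II_n)$), and a player can always refuse to encode. If you extend $\phi$ to a total invariant map, every rule-violating play gets sent to some default real, and whether that counts as a win has nothing to do with the game $A$; the opponent of the player who ought to win can then stonewall and win the auxiliary game outright. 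The payoff must therefore contain an asymmetric component of the form $\bigcup_n\bigl(\bigcap_{i\leq n}R^\I_i\setminus R^\II_n\bigr)$ penalizing whichever player \emph{first} fails to follow the rules, and one must verify separately that each rule set $R^\I_n$, $R^\II_n$ is individually $E_G$-invariant --- this verification is exactly where the F{\o}lner-type density condition of weak amenability is used, and it is the technical heart of the argument, not something absorbed into the construction of $\phi$.

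Second, your requirement that each player ``drive the next coordinate of $\phi(y)$ to any prescribed integer'' in finitely many moves, so that ``partial information about $\phi(y)$ becomes available after only finitely many auxiliary moves,'' is incompatible with invariance: an $E_G$-invariant function is constant on orbits, which are dense on the relevant part of $S^G$, so no coordinate of $\phi(y)$ can be determined by a finite portion of $y$. The encoding must be asymptotic (in the paper, limits of densities of agreeing classes over rings $B^\I_{n,j}$ as $j\to\infty$), and consequently the strategy transfer is not routine bookkeeping: one needs a wellfoundedness argument showing that the opponent's winning strategy must, at some \emph{finite} position, commit to the value it will eventually encode (the paper's analysis of declared, $m$-, and invalid rings and the positions $q_0\subseteq q_1\subseteq\cdots$). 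Without that argument you cannot read off the opponent's simulated moves at finite stages, and the simulation you describe cannot be carried out. A smaller omission: since it is open whether every countable group is weakly amenable, the reduction must first pass to an infinite weakly amenable subgroup $H\leq G$ (the paper's Theorem~\ref{thm:was}) and distribute the encoding over the cosets of $H$; your sketch applies weak amenability to $G$ itself.
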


The proof of Theorem~\ref{st} will involve a weak form of amenabilty of groups which
we simply call {\em weak amenability}. We give this definition next.

\begin{definition} \label{def:wa}
Let $G$ be a countable group.  We say $G$ is \emph{weakly amenable} if either $G$ is finite,
or if there is an equivalence relation $\sim$ on $G$ such that
\begin{enumerate}
\item \label{app1}$G\modsim$ is infinite
\item \label{app2} $\forall g \in G~\exists b(g) \in \mathbb{N}~\forall C \in G\modsim
\abs{\setof{C' \in G\modsim \suchthat gC \cap C' \neq \emptyset}} \leq b(g)$
\end{enumerate}and an increasing sequence of finite sets $A_n\subseteq G \modsim$,
such that $G\modsim\ =\bigcup_{n \in \omega} A_n$ such that for any $g \in G$
\begin{equation} \label{fc}
\lim_{n \to \infty}\frac{\abs{\{ C \in A_n \colon gC \subseteq \cup A_n \} }}
{\abs{A_n }}=1
\end{equation}
\end{definition}

We call an  equivalence relation $\sim $ on a group $G$ which satisfies both conditions (\ref{app1}) and (\ref{app2})
of Definition~\ref{def:wa} {\em appropriate}. We note that the equivalence classes $C$
in an appropriate equivalence relation need not be finite themselves, but in the definition of
weak amenability, the sets $A_n$ are finite (i.e., they are finite sets of equivalence classes).

We note that every amenable group $G$ is weakly amenable. This follows taking $\sim$
to be the equality equivalence relation on $G$. Note that the equality equivalence
relation  on $G$ is an appropriate equivalence relation (with $b(g)=1$ for every $g\in G$).

We will prove Theorem~\ref{st} by showing two separate results, one of which is a purely
algebraic result concerning weak amenability, and the other a pure game argument. 
The algebraic result is the following:

\begin{theorem} \label{thm:was}
Every infinite group  has an infinite weakly amenable subgroup.
\end{theorem}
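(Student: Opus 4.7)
The plan is first to reduce to the case of a countable group, and then to show that every countable infinite group is itself weakly amenable via a sphere decomposition. For the reduction, every infinite $G$ contains a countable infinite subgroup $H$: if some $g\in G$ has infinite order, take $H = \langle g\rangle \cong \mathbb Z$; otherwise iteratively pick $g_1, g_2, \ldots \in G$ with $g_{n+1} \notin \langle g_1, \ldots, g_n\rangle$ and take $H = \langle g_1, g_2, \ldots\rangle$, which is countable and infinite by construction. So I may assume $G$ itself is countable and infinite and build an appropriate $\sim$ together with the Følner sequence directly on $G$.

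On the countable infinite group $G$, I would fix a proper, subadditive, symmetric length function $\ell \colon G \to \mathbb N$ with $\ell(e) = 0$. Such a function exists by a standard construction: enumerate $G \setminus\{e\}$ as $s_1, s_2, \ldots$, assign weight $i$ to both $s_i$ and $s_i^{-1}$, and let $\ell(g)$ be the minimum total weight of an expression for $g$ in these letters. Properness is automatic (only finitely many letters have weight $\le n$, so each ball $B_n = \{g : \ell(g) \le n\}$ is finite), and subadditivity together with symmetry give the standard Lipschitz estimate $|\ell(gx) - \ell(x)| \le \ell(g)$ for all $g,x \in G$.

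Declare $g \sim h$ iff $\ell(g) = \ell(h)$, so the equivalence classes are the spheres $C_k = \ell^{-1}(k)$. Condition~(\ref{app1}) is immediate, since an infinite group with finite balls forces $\ell$ to take infinitely many values; and condition~(\ref{app2}) holds with $b(g) = 2\ell(g)+1$, because the Lipschitz estimate confines $gC_k$ to $C_{k-\ell(g)} \cup \cdots \cup C_{k+\ell(g)}$, i.e.\ at most $2\ell(g)+1$ classes. Take $A_n = \{C_k : 0 \le k \le n,\ C_k \ne \emptyset\}$: these are finite and increasing, with $\bigcup_n A_n = G/\sim$ and $\cup A_n = B_n$ as a subset of $G$. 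If $C_k \in A_n$ with $k \le n - \ell(g)$ then $gC_k \subseteq B_n$, and since at most $\ell(g)$ integers lie in $(n-\ell(g), n]$, we get $|A_n| - |A_{n-\ell(g)}| \le \ell(g)$; therefore the ratio in (\ref{fc}) is at least $1 - \ell(g)/|A_n|$, which tends to $1$ because $|A_n| \to \infty$.

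The main obstacle I foresee is verifying (\ref{fc}) without any amenability hypothesis on $G$, and the point that makes it work is precisely the contrast with classical Følner conditions: even if $G$ grows exponentially, the quotient $G/\sim$ is indexed by radii and so grows at most linearly in $n$, while left multiplication by $g$ only shifts spheres through a window of width $2\ell(g)+1$. In particular, the sphere partition produces the required Følner sequences even for groups such as $F_2$ or a Tarski monster, which have no proper infinite amenable subgroup, so these groups must themselves serve as their own witnesses in Theorem~\ref{thm:was}.
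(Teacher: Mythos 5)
Your proof is correct, and it takes a genuinely different route from the paper's. The paper also reduces to a countable group, but then splits into cases: if $G$ is non-torsion it contains an infinite cyclic (hence amenable, hence weakly amenable) subgroup; if $G$ is locally finite it is amenable; otherwise it contains an infinite subgroup $H=\langle F,g\rangle$ with $F$ a finite subgroup and $g$ of finite order, on which a length function is defined via representations $f_1g^{a_1}f_2g^{a_2}\cdots f_ng^{a_n}$, and the spheres of that length function are used exactly as in your computation. Your weighted word length $\ell$ collapses all three cases into one: properness of the weights makes $\ell$ automatically unbounded on any infinite group, and subadditivity plus symmetry are the only properties the window estimates for conditions (\ref{app2}) and (\ref{fc}) actually use --- the quotient $G\modsim$ grows at most linearly in the radius no matter how fast $G$ grows, which is the same mechanism the paper exploits for finitely generated groups such as $F_2$. (One small point to state explicitly: if $s_i^{-1}=s_j$ for some $j\neq i$, assign that letter the minimum of the two weights, or equivalently minimize over all decompositions; this keeps $\ell$ well defined and symmetric.) The payoff of your version is that it proves more: every countably infinite group is \emph{itself} weakly amenable, not merely possessed of a weakly amenable subgroup, which answers affirmatively the question the paper leaves open (``Is every countable group weakly amenable?'') and strengthens the Cayley-graph criterion attributed to Simon Thomas, since the weighted length function manufactures ``infinite diameter'' even for groups on which every unweighted word metric might be bounded. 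The paper's case analysis buys nothing extra here beyond staying within honest word lengths on a conveniently chosen subgroup, so your argument is both simpler in structure and stronger in conclusion; it deserves to be checked carefully precisely because it settles the paper's open question, but I see no gap in it.
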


The game argument is given in the following theorem.

\begin{theorem} \label{thm:ga}
If $G$ is a countable group which has an infinite weakly amenable subgroup,
then for every reasonable pointclass $\bG$ we have that
$\gegd$ implies $\bG$-determinacy.
\end{theorem}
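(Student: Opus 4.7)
Let $H \leq G$ be an infinite weakly amenable subgroup of $G$, with appropriate equivalence relation $\sim$, bound function $b$, and Følner-like sequence $\{A_n\}_{n \in \omega}$ of finite subsets of $H/{\sim}$, as in Definition~\ref{def:wa}. Given $A \subseteq \omega^\omega$ in $\bG$, the plan is to construct an $E_G$-invariant set $\tilde{A} \subseteq \omega^G$ in $\bG$ together with a transfer of winning strategies to $G(A)$; then $\gegd$ applied to $\tilde{A}$ will yield $\bG$-determinacy of $A$.

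The main step will be to design an $E_G$-invariant, $\bD^0_6$-measurable decoding $\phi \colon \omega^G \to \omega^\omega$; setting $\tilde{A} := \phi^{-1}(A)$ then produces the desired set, by closure of reasonable pointclasses under $\bD^0_6$-substitutions. The construction of $\phi$ will have two layers. At the inner layer, the weak amenability of $H$ is exploited: fix an enumeration $\{C_n\}$ of $H/{\sim}$ with $A_n = \{C_0, \dots, C_{m_n - 1}\}$ and a transversal $t \colon H/{\sim} \to H$, and define an $H$-invariant decoding $\phi_H \colon \omega^H \to \omega^\omega$ by a limiting aggregation over the transversal values $\{x(t(C)) : C \in A_N\}$ as $N \to \infty$. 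The Følner condition~(\ref{fc}) guarantees that replacing $x$ by an $H$-shift $h \cdot x$ perturbs only a vanishing fraction of the aggregated values, so the limit is $H$-invariant, while the bounded-overlap condition~(\ref{app2}) controls the complexity of the aggregation at the $\bD^0_6$ level. At the outer layer, aggregate $\phi_H$ applied to each coset restriction $x|_{g_i H}$ (over a fixed system of coset representatives $G = \bigsqcup_i g_i H$) in a way invariant under the induced left action of $G$ on cosets; one then checks that $\phi(g \cdot x) = \phi(x)$ for all $g \in G$.

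For strategy transfer, choose the enumeration $\pi \colon \omega \to G$ so that the parity of $\pi^{-1}(t(C_n))$ (with $t(C_n)$ placed in a fixed base coset, say $H$ itself) agrees with that of $n$, matching the turn structure of $G(A)$. Given a winning strategy $\tau$ for some player P in the game on $\tilde{A}$ (played via $\pi$), define $\tau^{*}$ in $G(A)$ by simulation: each move of $G(A)$ is placed at the canonical transversal point associated with the corresponding round, with arbitrary fillers at the remaining positions. Shift-invariance of $\phi$ and the parity alignment together ensure that $\phi(x)$ for the simulated play coincides with the resulting $G(A)$-play, so the winning condition $x \in \tilde{A} \iff \phi(x) \in A$ transfers directly to $\tau^{*}$.

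The main obstacle will be to achieve simultaneously (i) genuine $G$-invariance of $\phi$, (ii) $\bD^0_6$ complexity (so that $\tilde{A} \in \bG$), and (iii) a faithful enough encoding for strategies to transfer. The naive transversal-reading $\phi(x)_n := x(t(C_n))$ is not even $H$-invariant, while a naive saturation $\bigcup_{g \in G} g \cdot \phi^{-1}(A)$ is $G$-invariant but generally a countable union that escapes $\bG$. Weak amenability, with its Følner and bounded-overlap data on $H/{\sim}$, supplies exactly the combinatorial structure needed to build a limiting aggregation that threads this needle. The analogous argument for $S=2$ will proceed by encoding each move of $G(A)$ as a finite word of bits laid out within the Følner structure.
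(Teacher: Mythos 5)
Your high-level architecture (pass to an infinite weakly amenable subgroup $H$, use the F\o lner-type data on $H\modsim$ to build an invariant decoding, and invoke closure of reasonable pointclasses) matches the paper's, but there are two genuine gaps, both at the point where the construction has to actually work. First, your encoding and your decoding are incompatible. You propose to place the $n$th move of $G(A)$ at a single transversal point $t(C_n)$ with ``arbitrary fillers'' elsewhere, while decoding by a shift-invariant \emph{limiting} aggregation over $A_N$ as $N\to\infty$. Any such invariant limit is blind to the value at a single position (or any finite or density-zero set of positions): changing one coordinate changes the aggregate by a vanishing proportion, which is exactly why the limit is invariant. So $\phi$ of your simulated play is determined by the fillers, not by the intended moves, and the identity $\phi(x)=$ the $G(A)$-play fails. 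To be recoverable by an invariant decoder, each move $m$ must be written on an asymptotically full proportion of the classes in an entire designated infinite family of ``rings'' $A_{c_{n,j}}\setminus A_{c_{n,j}-1}$ (across all cosets of $H$), and the positions must be partitioned between the two players ring-by-ring rather than point-by-point; this is what the paper's sets $G_\I$, $G_\II$ and $B^\I_{n,j}$, $B^\II_{n,j}$ accomplish.

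Second, the payoff set cannot be $\phi^{-1}(A)$ for a \emph{total} $\phi$, and the strategy transfer cannot be a round-for-round ``parity alignment.'' A single move of $G(A)$ corresponds to infinitely many moves of the auxiliary game, so when, say, $\II$ has a winning strategy $\tilde\tau$ in the auxiliary game and must produce a single response $m_1$ to $\I$'s move $m_0$, that response has to be extracted from $\tilde\tau$'s behavior along an infinite family of future positions. This requires (i) invariant rule sets $R^\I_n$, $R^\II_n$ with a ``first player to break the rules loses'' payoff --- otherwise the auxiliary winner can win by sabotaging the decoding (making your aggregation diverge or land on a default value), leaving nothing to read off --- and (ii) a wellfoundedness/compactness argument showing that $\tilde\tau$ must eventually commit, at some finite position, to a fixed digit on all sufficiently late rings, which is what defines $\tau(m_0)$. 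Your proposal contains neither ingredient, and without them the transfer step does not go through.
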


\section{Weak Amenability} \label{sec:wa}

In this section we establish that certain classes of groups are weakly amenable,
including all the amenable groups and free groups, and prove Theorem~\ref{thm:was}.

The following two lemmas are easy and well-known. 

\begin{lemma} \label{lem:nt}
If $G$ is a non-torsion group, then $G$ has an infinite weakly amenable subgroup.
\end{lemma}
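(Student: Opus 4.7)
The plan is to exploit the fact that a non-torsion group contains an element of infinite order, producing an infinite cyclic subgroup, which is then shown to be weakly amenable directly. This reduces the lemma to verifying that $\mathbb{Z}$ itself is weakly amenable.

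First, I would choose $g \in G$ of infinite order (which exists by the non-torsion hypothesis) and consider the subgroup $H = \langle g \rangle$, which is infinite and isomorphic to $\mathbb{Z}$. By the remark immediately preceding the lemma (every amenable group is weakly amenable via the equality equivalence relation), it suffices to observe that $\mathbb{Z}$ is amenable, or directly, to verify the conditions of Definition~\ref{def:wa} for $H$.

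Taking $\sim$ to be equality on $H$, condition (\ref{app1}) is immediate since $H/\!\sim\,=H$ is infinite. For condition (\ref{app2}), each equivalence class is a singleton $\{h\}$, so $gC$ is again a singleton and meets exactly one class, giving $b(g)=1$ for every $g \in H$. For the F\o{}lner-type condition, identify $H$ with $\mathbb{Z}$ via $g^n \mapsto n$ and set $A_n = \{-n,-n+1,\dots,n\}$. Then for any fixed $k \in \mathbb{Z}$, the set $\{c \in A_n : k+c \in A_n\}$ has cardinality at least $2n+1-|k|$, so the ratio in (\ref{fc}) tends to $1$.

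There is no real obstacle here: the main content is simply recognizing that the infinite cyclic subgroup generated by an element of infinite order is amenable, and that amenable groups have already been noted to be weakly amenable. The only thing to be slightly careful about is confirming that the explicit witnesses (equality as $\sim$, intervals as $A_n$) satisfy the precise form of Definition~\ref{def:wa}, but this is a routine check.
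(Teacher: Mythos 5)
Your proposal is correct and follows exactly the paper's argument: an element of infinite order generates an infinite cyclic (hence amenable, hence weakly amenable) subgroup. The explicit verification of Definition~\ref{def:wa} for $\mathbb{Z}$ via the equality relation and the intervals $A_n=\{-n,\dots,n\}$ is a correct elaboration of what the paper leaves implicit.
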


\begin{proof} 
This is immediate as an element of infinite order generates an infinite cyclic
subgroup, which is amenable and so weakly amenable.
\end{proof}

\begin{lemma} \label{lem:lf}
If $G$ is locally finite then $G$ is amenable, and so weakly amenable.
\end{lemma}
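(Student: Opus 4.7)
The plan is to verify the first implication (locally finite implies amenable) by exhibiting an explicit Følner sequence, and then invoke the remark already made in the excerpt that amenable groups are weakly amenable.

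First, I would handle the finite case trivially: if $G$ is finite, then $G$ is weakly amenable by the first clause of Definition~\ref{def:wa}, and amenable in the usual sense via the uniform probability measure. So assume $G$ is infinite and locally finite. Fix an enumeration $G=\{g_0,g_1,\dots\}$ and let $H_n$ be the subgroup of $G$ generated by $\{g_0,\dots,g_n\}$. By local finiteness, each $H_n$ is a finite subgroup, and clearly $H_n\subseteq H_{n+1}$ and $G=\bigcup_n H_n$.

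I claim $(H_n)_{n\in\omega}$ is a Følner sequence, which suffices for amenability. Given any $g\in G$, pick $N$ with $g\in H_N$ (such an $N$ exists because $g=g_i$ for some $i$, so $g\in H_i$). For every $n\geq N$ we have $g\in H_N\subseteq H_n$, and since $H_n$ is a subgroup, $gH_n=H_n$. Thus
\begin{equation*}
\frac{\lvert gH_n \triangle H_n\rvert}{\lvert H_n\rvert}=0
\end{equation*}
for all $n\geq N$, which is stronger than the usual Følner condition. Hence $G$ is amenable.

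Finally, by the observation immediately following Definition~\ref{def:wa} in the excerpt, every amenable group is weakly amenable (take $\sim$ to be equality and use the Følner sequence as the sets $A_n$, noting that $b(g)=1$ for all $g$ and that the required limit condition is exactly the Følner property we just established). This completes the proof. There is no real obstacle here; the only point worth stating carefully is that the nested chain of finite subgroups really does give a Følner sequence, which follows immediately from the fact that $gH_n=H_n$ once $g\in H_n$.
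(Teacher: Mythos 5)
Your proof is correct and follows essentially the same route as the paper: write $G$ as an increasing union of finite subgroups, observe these form a F\o lner sequence (since $gH_n = H_n$ once $g \in H_n$), and then appeal to the remark after Definition~\ref{def:wa} that amenable implies weakly amenable. You have simply spelled out the details that the paper leaves implicit.
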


\begin{proof}
We may write $G=\bigcup G_n$, an increasing union of finite subgroups. The $G_n$
can be used as Folner sets to witness the amenability of $G$.
\end{proof}

\begin{proof}[Proof of Theorem~\ref{thm:was}]
We may assume without loss of generality that $G$ is an infinite countable group.
By Lemma~\ref{lem:nt} we may assume that $G$ is a torsion group. By Lemma~\ref{lem:lf}
we may assume that $G$ is not locally finite. Then $G$ contains an infinite
subgroup $H=\langle F, g\rangle$ generated by a finite subgroup $F\leq G$ and
an element $g \in G$ of finite order. If suffices to show that any such group $H$
is weakly amenable.

Every element $h\in H$ can be written (not uniquely) in the form
$h=f_1 g^{a_1} f_2 g^{a_2}\cdots f_n g^{a_n}$ where $f_i\in F$ and
$a_i$ are positive integers less than the order of $g$.
We call $n$ the length of this representation of $h$. We let
$|h|$ denote the minimum length of a representation of $h$.
Note that $|h^{-1}| \leq |h|+1$ for any $h \in H$. 
We easily have that $| h_1 h_2|\leq |h_1|+ |h_2|$ and also
$|h_1h_2| \geq | |h_1|-|h_2||-1$.

We let $\sim$ be the equivalence relation on $H$ given by $h_1 \sim h_2$ iff
$|h_1|=|h_2|$. Each equivalence class is finite as $F$ is finite
and $g$ has finite order. So, $H \modsim$ is infinite.
Let $h \in H$. By the above observations we have that for any
$k \in H$ that $ |k|-|h|-1 \leq |hk|\leq |k|+|h|$ and we may
take $b(h)= 2|h|+2$ to satisfy (\ref{app2}) of Definition~\ref{def:wa}.
Thus $\sim$ is an appropriate equivalence relation on $H$.

Let $A_n=\{ [h]_\sim \colon |h|\leq n\}$, so $|A_n|=n+1$. For $h \in H$ we have that
$\{C\in A_n \colon h C\subseteq \cup A_n\} \supseteq
\{ [k] \in A_n\colon |k|\leq  n-|h| \}$, and so
$|\{C\in A_n \colon h C\subseteq \cup A_n\}| \geq n-|h|+1$, and
Equation~\ref{fc} follows.
\end{proof}

The argument above for the proof of Theorem~\ref{thm:was} in fact shows
the following.

\begin{theorem}
Every finitely generated group is weakly amenable.
\end{theorem}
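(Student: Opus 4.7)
The plan is to observe that the argument given in the proof of Theorem~\ref{thm:was} is already essentially an argument about word length with respect to a finite generating set, and so adapts immediately to the general finitely generated case. If $G$ is finite, then $G$ is weakly amenable by definition, so assume $G$ is infinite and fix a finite symmetric generating set $S$ of $G$. Let $|h|$ denote the word length of $h$ with respect to $S$, so that $|h_1 h_2| \leq |h_1| + |h_2|$ and hence $|h_1 h_2| \geq ||h_1|-|h_2||$ for all $h_1,h_2 \in G$.

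Define the equivalence relation $\sim$ on $G$ by $h_1 \sim h_2$ iff $|h_1| = |h_2|$. Each equivalence class is finite since there are only finitely many words of a given length over the finite alphabet $S$, and since $G$ is infinite this forces $G \modsim$ to be infinite, giving condition (\ref{app1}) of Definition~\ref{def:wa}. For condition (\ref{app2}), given $g \in G$ and any equivalence class $C = [k]_\sim$, every element $gk' \in gC$ satisfies $||k'| - |g|| \leq |gk'| \leq |k'|+|g|$, so $gC$ meets at most $2|g|+1$ equivalence classes; taking $b(g) = 2|g|+1$ shows that $\sim$ is appropriate.

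Finally, let $A_n = \{[h]_\sim \suchthat |h| \leq n\}$, which is a finite set of size $n+1$ (assuming every word length up to $n$ is realized, which holds since $G$ is infinite and finitely generated). Exactly as in the proof of Theorem~\ref{thm:was}, for any fixed $g \in G$ and any class $[k] \in A_n$ with $|k| \leq n - |g|$, we have $gk' \in G$ of length at most $n$ for every $k' \in [k]$, so $g[k] \subseteq \bigcup A_n$. Thus
\[
\frac{\abs{\setof{C \in A_n \suchthat gC \subseteq \bigcup A_n}}}{\abs{A_n}} \geq \frac{n-|g|+1}{n+1} \to 1
\]
as $n \to \infty$, verifying Equation~\ref{fc}.

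There is no real obstacle here; the only thing to check is that word length with respect to a finite symmetric generating set has the same metric properties (triangle inequality, finite balls) that the length function in the proof of Theorem~\ref{thm:was} enjoyed. Once that observation is made, the same $\sim$, the same $b(g)$ (up to a constant), and the same Følner-like sequence $A_n$ work verbatim.
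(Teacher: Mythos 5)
Your proof is correct and follows essentially the same route as the paper's own (sketched) argument: word length with respect to a finite generating set, $\sim$ defined by equal length, finite classes with infinite quotient, $b(g)$ linear in $|g|$, and the balls $A_n$ as the F\o lner-like sequence. The only difference is cosmetic (you use a symmetric generating set, giving $b(g)=2|g|+1$ rather than the paper's $2|h|+2$), and your added check that every length up to $n$ is realized is a correct, if minor, point the paper leaves implicit.
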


\begin{proof}[sketch of proof] Assume $G$ is infinite and finitely generated.
Let $S=\{ g_1,\dots, g_n\}$ be a finite generating set for
$G$. For $g \in G$, let $|g|$ be the minimal length of a word representing $g$ using  the symbols
$g_i$, $g^{-1}_i$, for $g_i \in S$ (we use only $g_i$ and $g^{-1}_i$ here, not other powers of
the $g_i$). We define $\sim$ in the same way as before (i.e., $g\sim h$ iff $|g|=|h|$).
Note that $G\modsim$ is still infinite with this modification. 
The rest of the argument proceeds as before.
\end{proof}

In fact, as pointed out to us by Simon Thomas, this argument shows the following even more general fact.

\begin{theorem}
If $G$ is a countable group which has a Cayley graph with an infinite diameter,
then $G$ is weakly amenable.
\end{theorem}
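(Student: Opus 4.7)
The plan is to mimic the proof of the finitely generated case, but using the generating set of the given Cayley graph in place of a finite generating set. Suppose $S \subseteq G$ is the (symmetric) generating set defining a Cayley graph of $G$ with infinite diameter. For $g \in G$, let $|g|$ denote the word-length of $g$ with respect to $S$ (equivalently, the graph distance from the identity to $g$ in the Cayley graph). Define $\sim$ on $G$ by $g \sim h$ iff $|g| = |h|$.

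First I would verify that $\sim$ is appropriate. Because the Cayley graph is connected with infinite diameter, every non-negative integer arises as the word-length of some element of $G$ (walk along a shortest path to a vertex of large distance to realize intermediate lengths), so $G\modsim$ is infinite, verifying (\ref{app1}). The standard subadditivity $|g_1 g_2| \leq |g_1| + |g_2|$ together with the reverse bound $|g_1 g_2| \geq \bigl||g_1| - |g_2|\bigr|$ (obtained from $|h| \leq |g^{-1}| + |gh| = |g| + |gh|$) shows that for any $g,h \in G$, the word-length of $gh$ lies within $|g|$ of $|h|$. Hence left multiplication by $g$ carries each $\sim$-class into the union of at most $2|g|+1$ classes, so we may take $b(g) = 2|g|+1$, verifying (\ref{app2}). (Note that the individual classes $[h]_\sim$ may be infinite now, but this is explicitly allowed by the remark following Definition~\ref{def:wa}; only the sets $A_n$ defined below must be finite.)

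Next I would take $A_n = \{[h]_\sim : |h| \leq n\}$, which by the connectedness observation above has exactly $|A_n| = n+1$ elements. For any fixed $g \in G$ and any class $[h] \in A_n$ with $|h| \leq n - |g|$, the subadditivity bound gives $g \cdot [h] \subseteq \bigcup A_n$. Therefore
$$\abs{\setof{C \in A_n \suchthat gC \subseteq \cup A_n}} \geq n - |g| + 1,$$
and dividing by $|A_n| = n+1$ yields a limit of $1$ as $n \to \infty$, establishing (\ref{fc}).

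There is no serious obstacle: the argument is a direct transcription of the finitely generated case, once one observes that infinite diameter guarantees both the infinitude of $G\modsim$ and the linear growth of $|A_n|$. The only point that requires comment — and which is already anticipated by the text following Definition~\ref{def:wa} — is that equivalence classes need not be finite when the generating set $S$ is infinite, but the definition of weak amenability does not demand this.
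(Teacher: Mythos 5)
Your proof is correct and follows exactly the route the paper intends: the paper gives no details for this theorem (it simply asserts that the argument for the finitely generated case carries over), and your write-up is a faithful transcription of that argument, correctly noting that infinite diameter yields unboundedness of the word-length function (hence $G\modsim$ infinite and $|A_n|=n+1$) and that the now possibly infinite $\sim$-classes are permitted by Definition~\ref{def:wa}. No gaps.
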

$\square$

In particular, the free group $F_\omega$ on infinitely many generators is weakly amenable.

\begin{question}
Is every countable group weakly amenable?
\end{question}

\section{Proof of Theorem~\ref{st}} \label{sec:shiftproof}

In this section we use a game argument to prove Theorem~\ref{thm:ga},
which in view of Theorem~\ref{thm:was} implies Theorem~\ref{st}.
For convenience we restate Theorem~\ref{thm:ga}:

\begin{theorem*}
If $G$ is a countable group which has an infinite weakly amenable
subgroup, then for every reasonable pointclass $\bG$ we have that
$\gegd$ implies $\bG$-determinacy
\end{theorem*}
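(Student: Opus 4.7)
The plan is to reduce arbitrary $\bG$-determinacy to $E_G$-invariant $\bG$-determinacy. Given $A\subseteq 2^\omega$ in $\bG$ (the passage from $\omega^\omega$ games to $2^\omega$ games is standard for reasonable $\bG$), I will construct an $E_G$-invariant set $A^*\subseteq S^G$ in $\bG$ together with an enumeration $\pi$ of $G$ such that determinacy of $\pi^{-1}(A^*)\subseteq 2^\omega$, which is guaranteed by $\gegd$, entails determinacy of $A$. The construction rests on the infinite weakly amenable subgroup $H\leq G$ obtained from Theorem~\ref{thm:was}, together with its appropriate equivalence relation $\sim$, bound function $b(\cdot)$, and Folner sets $A_n\subseteq H\modsim$.

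For the game setup, enumerate $H\modsim=\{C_0,C_1,\ldots\}$ so that each $A_n$ is an initial segment, and fix representatives $h_n\in C_n$. Partition the classes into two infinite families $\mathcal{I}$ and $\mathcal{II}$ (say odd- vs.\ even-indexed classes), and design $\pi$ so that in the game on $S^\omega\cong S^G$ Player~I's odd-parity moves fill positions lying in classes of $\mathcal{I}$ while Player~II's even-parity moves fill positions in classes of $\mathcal{II}$; positions in $G\setminus H$ are placed in a fixed tail order so as not to affect the payoff. The decoding $\phi\colon S^G\to 2^\omega$ samples at class representatives, $\phi(x)(n)=x(h_n)$, with $\mathcal{I}$ and $\mathcal{II}$ interleaved so that the decoded play alternates exactly as in $G(A)$.

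For the payoff, set
\[
  x\in A^* \iff \lim_{n\to\infty}\frac{\abs{\{g\in\cup A_n\colon \phi(g\cdot x)\in A\}}}{\abs{\cup A_n}}=1,
\]
so that $A^*$ consists of those $x$ for which the Folner-density of left shifts $g\in H$ sending $x$ into the ``raw'' set $\{y\colon \phi(y)\in A\}$ tends to one. The Folner condition \eqref{fc}, together with $b(\cdot)$ controlling how shifts permute classes, makes this density invariant under left multiplication by every $h\in H$; full $E_G$-invariance is then obtained by taking the countable $G$-saturation, or by a symmetric coset averaging over $G/H$. The reasonable-pointclass closure properties keep $A^*\in\bG$, since $\phi$ is $\bD^0_6$-measurable and the limit-density condition is captured by Boolean operations absorbed by the closure hypotheses.

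The strategy transfer is the combinatorial crux and the main obstacle. If $\sigma^*$ is a winning strategy for Player~I in the auxiliary game, Player~I in $G(A)$ simulates it: she encodes Player~II's bits $b_0,b_1,\ldots$ in $G(A)$ at Player-II class representatives (placed in a judiciously chosen coset or shift), and plays $\sigma^*$ at her own Player-I positions. The simulated $x\in A^*$ then satisfies $\phi(g\cdot x)\in A$ for a density-one set of shifts $g$. The task is to use the freedom in choosing the coset---and the freedom in distributing Player~II's encoded bits across the infinitely many positions within each class---so that a pigeonhole/Fubini argument over the Folner filtration produces an encoding with the identity shift $g=e$ in the good set, yielding $\phi(x)\in A$ and a winning $G(A)$-play. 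The symmetric argument transfers Player~II's winning strategy. Converting the Folner-density-$1$ statement into ``the identity shift is good'' is where the full strength of weak amenability (Equation \eqref{fc}, not merely a positive-density Folner condition) is essential.
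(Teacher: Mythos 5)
The central gap is in the strategy transfer, and it is not a detail that can be deferred: your payoff $A^*$ does not force the auxiliary winning strategy to produce a play that decodes to anything. Since membership in $A^*$ only asks that density-one many shifts $g$ satisfy $\phi(g\cdot x)\in A$, a winning strategy $\sigma^*$ for the auxiliary game is free to play arbitrary values at the sampled representatives $h_n$ (indeed on any density-zero set of positions) and still win; so the values $x(h_n)$ carry no information and there is nothing for the simulating player in $G(A)$ to read off. The proposed ``pigeonhole/Fubini over the Folner filtration to put the identity shift in the good set'' is not an argument: density one is compatible with the identity shift being bad, and in $G(A)$ you must produce one specific play whose moves are committed at finite stages, whereas $A^*$ is a pure tail condition that never forces such a commitment. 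The paper's proof is organized precisely around supplying this missing mechanism: it defines rule sets $R^\I_n$, $R^\II_n$ demanding that on designated rings $B^\I_{n,j}=A_{c_{n,j}}\setminus A_{c_{n,j}-1}$ the player's values be constant on an asymptotically full proportion of the $\sim$-classes, uniformly over all cosets $g_kH$; it makes the first player to violate these rules lose; and it then extracts the opponent's committed digit at a \emph{finite} position by a wellfoundedness argument on positions consistent with the auxiliary strategy. That commitment-and-extraction mechanism, absent from your proposal, is the whole content of the proof.

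There are two further problems. First, definability: reasonableness gives only closure under unions and intersections with $\bD^0_6$ sets and under $\bD^0_6$-measurable substitutions. Your $A^*$ is a countable Boolean combination of the sets $\setof{x \suchthat \phi(g\cdot x)\in A}$, i.e., of infinitely many substituted copies of $A$, and nothing in the hypothesis closes $\bG$ under such operations; the same objection applies to ``taking the $G$-saturation'' to repair invariance. The paper's $\tilde A$ is deliberately arranged as the union of a $\bS^0_5$ set with the intersection of a $\bP^0_5$ set and $f^{-1}(A)$ for a \emph{single} $\bD^0_6$-measurable decoding $f$, which is exactly what reasonableness covers. Second, your density is computed over elements of $\cup A_n$, but the classes of an appropriate equivalence relation need not be finite (the paper notes this explicitly), so $\abs{\cup A_n}$ may be infinite; the Folner-type condition of weak amenability counts classes, not group elements, and a correct argument must work at the level of classes throughout.
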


\begin{proof}
Let $H \leq G$ be an infinite weakly amenable subgroup.  Let the
equivalence relation $\sim$ on $H$ and the the sequence of sets
$\setof{A_n}_{n \in \omega}$ witness this.
We recall here that the sets $A_n$ are finite subsets of the quotient
$H\modsim$. Without loss of
generality, assume that the sequence $\setof{A_n}$ is such that
$\lim_{n \to \infty} \frac{|A_{n+1} \setminus A_n|}{|A_{n+1}|} = 1$.

Define $H_1$ and $H_2$ subsets of $H$ by
\[H_1 = \cup\left(A_0 \cup \bigcup_{n \in \omega} A_{2n+2} \setminus A_{2n+1} \right)\]
\[H_2 = \cup\left(\bigcup_{n \in \omega} A_{2n+1} \setminus A_{2n}\right) \]
Let $\setof{g_k H \suchthat k \in \omega}$ enumerate the cosets of $H$
in $G$.  Let $G_\I = \bigcup_{k \in \omega} g_k H_1$ and $G_\II =
\bigcup_{k \in \omega} g_k H_2$. Clearly $G_\I \cap G_\II=\emptyset$ and $G=G_\I \cup G_\II$.

Let $A \subseteq X^\omega$ be in $\bGamma$.  We'll define an alternate
payoff set $\tilde A \subseteq X^G$ which is $E_G$-invariant and
simulate the game $A$ by the game $\tilde A$ in which player $\I$ makes moves
corresponding to $g \in G_\I$ and player $\II$ makes moves
corresponding to $g \in G_\II$ (we assume the enumeration $\pi$
satisfies $\pi(g)$ is even for $g \in G_\I$, and $\pi(g)$
is odd for $g \in G_\II$).

We will define sets of rules, which if followed by both players
will enforce that each player in the game $\tilde A$ eventually specifies
moves in $A$ to play.
First, we partition the positive even integers into infinitely many
disjoint subsequences $\setof{\setof{c_{n, j}}_{j \in \omega}}_{n \in\omega}$,
and the odd integers  also into $\setof{\setof{d_{n,j}}_{j \in \omega}}_{n \in \omega}$.
Let $B^\I_{n, j}$ denote
$A_{c_{n, j}} \setminus A_{c_{n, j}-1}$ and $B^\II_{n, j}$ denote
$A_{d_{n, j}} \setminus A_{d_{n, j}-1}$. Note that $\lim_{j \to \infty}
\frac{ |B^\I_{n,j}|}{|A_{c_{n,j}}|}=1$, and likewise for $B^\II_{n,j}$.

We will have the players
specify in $\tilde{A}$ their $n$th move in $A$ by playing more
of those moves (by proportion) on the rounds corresponding to
$B^\I_{n, j}$ (or $B^\II_{n, j}$ respectively).  In order to
successfully specify a move, they must have that the limit as $j \to
\infty$ of the proportion of \emph{classes} $C$ for which \emph{all}
the moves in $C$ are the same goes to $1$.

Now we give the formal definition of the rules which will enforce the
correct encoding of moves from $A$ into $\tilde A$.
\[R_n^\I = \setof{x \in X^G \suchthat \exists m \forall k
\lim_{j \to \infty}\frac{\abs{\setof{C \in B^\I_{n, j} \suchthat \forall h
\in C~ x(g_kh)= m}}}{\abs{B^\I_{n, j}}}=1}\]
\[R_n^\II = \setof{x \in X^G \suchthat \exists m \forall k \lim_{j \to \infty}
\frac{\abs{\setof{C \in B^\II_{n, j} \suchthat \forall h \in C~ x(g_kh)= m}}}{\abs{B^\II_{n, j}}}=1}\]

We claim these rules are invariant.  Suppose $x \in R_n^\I$ and $g \in G$.
Let $m$ witness the fact that $x \in R_n^\I$, and fix $k \in \omega$.
We want to show that $m$ also witnesses  $g\cdot x \in R_n^\I$, or in other
words that
\[\lim_{j \to \infty}\frac{\abs{\setof{C \in B^\I_{n, j}\suchthat
\forall h \in C~ g \cdot x(g_kh)= m}}}{\abs{B^\I_{n, j}}}=1\]

Let $\ell\in \omega$ and $h' \in H$ be so that $g^{-1}g_k = g_\ell h'$
and notice that, by definition of the shift, we are attempting to show
that the following set is large.
\begin{align}
&\setof{C \in B^\I_{n, j} \suchthat \forall h \in C~ g \cdot x(g_kh)=
    m}\nonumber\\ =&\setof{C \in B^\I_{n, j} \suchthat \forall h \in
    C~ x(g^{-1}g_kh)= m}\nonumber\\ =&\setof{C \in B^\I_{n, j}
    \suchthat \forall h \in C~ x(g_\ell h' h)= m}\nonumber
\end{align}

Now define $S_j$ by the formula
\[S_j = \setof{C \in B^\I_{n, j} \suchthat \forall h \in C~ x(g_\ell h' h)= m}\]
and $T_j$ by
\[T_j = B^\I_{n, j} \setminus S_j.\]
It will suffice to show that $T_j$ is small compared to $B^\I_{n, j}$
as $j \to \infty$.  To see this, recall that $B^\I_{n, j} = A_{c_{n,
    j}} \setminus A_{c_{n, j}-1}$ and observe that each class $C$ in
$T_j$ is of one of the following three types:
\begin{enumerate}
\item $C$ is moved by $h'$ to intersect some class in $A_{c_{n,
    j}-1}$,
\item $C$ is moved by $h'$ to intersect some class outside $A_{c_{n,
    j}}$,
\item or $C$ is moved by $h'$ to hit some other class $C'$ which fails
  to specify $m$ properly.
\end{enumerate}
Thus,
\begin{align}
T_j \subseteq &\setof{C \in B^\I_{n, j} \suchthat \exists C' \in
  A_{c_{n, j}-1}~ h'C\cap C' \neq \emptyset}\nonumber\\ \cup &
\setof{C \in B^\I_{n, j} \suchthat h'C \nsubseteq \cup A_{c_{n,j}} }\nonumber\\ \cup &
\setof{C \in B^\I_{n,j} \suchthat h'C \subseteq \cup B^\I_{n, j} \wedge \ \exists C'
( h'C\cap C' \neq \emptyset \wedge 
\exists \hat h \in C' \ x(g_\ell \hat h)\neq m)}.\nonumber
\end{align}

We want to show that
\[\lim_{j \to \infty} \frac{\abs{T_j}}{\abs{B^\I_{n, j}}}=0\]
and we will do so by showing that the limit is $0$ for each of the
three sets above whose union contains $T_j$.

The first set has size at most $b(h')\abs{A_{c_{n, j}-1}}$, which is
small compared to $\abs{B^\I_{n, j}}$ as $j \to \infty$.  The second
set is small compared to $|A_{c_{n, j}}|$ by the weak amenability
hypothesis, and so also small compared to $|B^\I_{n,j}|$. 
The third set has size at
most $b(h'^{-1}) \abs{T_j'}$, where $T_j'=\setof{C' \in B^\I_{n, j}
  \suchthat \exists \hat{h} \in C'\ x(g_\ell \hat{h}) \neq m}$, which is small
compared to $\abs{B^\I_{n, j}}$ as $j \to \infty$ since $m$ witnesses $x \in
R^\I_n$.

Thus the rule sets $R^\I_n$ and $R^\II_n$ are all invariant.

Now we define the payoff for the auxiliary game. Via the bijection
$\pi \colon \omega \to G$ we have a natural bijection between $X^\omega$
and $X^G$. The auxiliary game is officially a subset of $X^\omega$, but
we view it as a subset of $X^G$ with this bijection. Thus, for a
position $n$ in the game, the move $y(n)$ is viewed
as giving the value $\tilde{x}(\pi(n))$, where $\tilde{x}\in X^G$
is the function the players are jointly building.
The payoff $\tilde{A} \subseteq X^G$ for player $\I$ in the auxiliary
game is given by:

\[\tilde A = \bigcup_{n \in \omega} \left( \bigcap_{i \leq n} R^\I_i
\setminus R^\II_n\right) \cup \left(\bigcap_{n \in \omega} \left(R^\I_n
\cap R^\II_n\right)\cap f^{-1}(A)\right)\]

\noindent
where $f$ is the following decoding function with domain $\bigcap_{n \in \omega}
\left(R^\I_n \cap R^\II_n\right)$. For $\tilde{x}\in \bigcap_{n \in \omega}
\left(R^\I_n \cap R^\II_n\right)$, define

\[f(\tilde{x})(2n)=m \Leftrightarrow \forall k \lim_{j \to \infty}
\frac{\abs{\setof{C \in B^\I_{n, j} \suchthat \forall h \in C~ \tilde{x}(g_kh)= m}}}{\abs{B^\I_{n, j}}}=1\]
\[f(\tilde{x})(2n+1)=m \Leftrightarrow \forall k
\lim_{j \to \infty}\frac{\abs{\setof{C \in B^\II_{n, j} \suchthat
\forall h \in C~ \tilde{x}(g_kh)= m}}}{\abs{B^\II_{n, j}}}=1\]

Since all the rule sets $R_n^\I$ and $R_n^\II$ are invariant, and the
function $f$ is invariant, $\tilde{A}$ is invariant.  We want to show
that whichever player has a winning strategy in the game $\tilde{A}$ has a
winning strategy for $A$.

The rule sets $R^\I_n$, $R^\II_n$ are easily $\bP^0_3$ if $X$ is finite,
and $\bS^0_4$ if $X=\omega$. This easily gives that $\tilde{A}$ is
the union of a $\bS^0_5$ set with the intersection
of a $\bP^0_5$ set and $f^{-1}(A)$. A simple computation gives that
$f$ is $\bD^0_6$-measurable. Since $\bG$ is reasonable, $\tilde{A}\in \bG$.

Suppose now $\tilde \tau$ is a winning strategy for player $\II$ in
$\tilde A$ (the case for player $\I$ is similar but slightly easier).
We will define a winning strategy $\tau$ for player $\II$ in
$A$.

We call a class $C$ {\em declared} (at position $p$)
if $C\cap \dom(p)\neq \emptyset$.
At any position $p$ in the game $\tilde{A}$ only
finitely many digits of the resulting real $\tilde{x} \in X^G$ have been
determined, and thus only finitely many classes $C$ have been
declared.   For each position $p$ and declared class $C$ relative to $p$, we have exactly one of the
following:
\begin{enumerate}
\item for some $m \in X$, for all moves $p(g)$ so far played with $g\in C$
  we have $p(g)=m$, (we say in this case that $C$ is a \emph{$m$-class})
\item or there moves $p(g)$, $p(h)$ played so far with $g,h \in C$
for which $p(g)\neq p(h)$. (we say in this case that $C$ is an \emph{invalid class}).
\end{enumerate}
For every position $p$ of the game, any class $C$ is either
undeclared, an $m$-class for some unique $m$, or an invalid class.
Over the course of the game, a class can change from an
undeclared class to an $m$-class, and will then either remain
an $m$-class or become an invalid class at some point. 
Note that invalid classes can never change.  Thus the
players' progress towards following the rules can actually be measured
at a finite position $p$.

We say a ring $B^\I_{n,j}$ or $B^\II_{n,j}$ is declared
relative to a position $p$ if all of the classes $C$ in this
ring are declared relative to $p$. 
Consider one of the sets $B^\I_{n,j}$ (or $B^\II_{n,j}$).
Suppose $p$ is a position of the auxiliary game and
$B^\I_{n,j}$ is a declared ring. 
We say (relative to the position $p$)
that $B^\I_{n,j}$ is an invalid ring if at least $1/10$ of the classes $C\in B^\I_{n,j}$
are invalid at position $p$. We say $B^\I_{n,j}$ is an $m$-ring
if at least $1/2$ of the classes $C\in B^\I_{n,j}$ are $m$-classes. 
If $B^\I_{n,j}$ is invalid at some position $p$, and $q$ is a position
which extends $p$, then $B^\I_{n,j}$ is also invalid with respect to $q$. 
If $B^\I_{n,j}$ (or $B^\II_{n-j}$) is declared but not an $m$-ring
with respect to $p$, then it is not an $m$-ring with resprct to
any extention $q$ of $p$.

%% Now note then that in order to follow a rule, say $R_0^\II$, it is
%% necessary (but not sufficient) for player $\II$ to eventually (for all
%% $j \geq j_0)$ have, for some $m$, all the $m$-classes in $B^\II_{0,
%%   j}$ outnumber the non-$m$-classes, or vice versa.  In other words,
%% if player $\I$ is satisfying $R_0^\I$, then player $\II$ must have
%% that the invalid classes are eventually very infrequent, and that
%% which $m$ is ``leading'' cannot keep changing.  With this in mind,
%% we'll say that $B^\II_{0, j}$ is \emph{committed to $m$} if more than
%% half of the classes in $B^\II_{0, j}$ have had some moves played in
%% them, and the $m$-classes constitute the majority of the classes
%% represented so far.

Consider the first round of the game $A$. Suppose $\I$ makes first move $m_0$
in this game, and we define $\tau(m_0)$. Consider the set $P_{m_0}$ of positions
$p$ of even length in the auxiliary game $\tilde{A}$ satisfying:
\begin{enumerate}
\item
$p$ is consistent with $\tilde{\tau}$.
\item
For every $j$, every class $C\in B^\I_{0,j}$, and every $g \in C\cap \dom(p)$,
we have that $p(g)=m_0$.
\end{enumerate}
First note that we cannot have a sequence $p_0, p_1,\dots$ in $P_{m_0}$
with $p_{n}$ extending $p_{n-1}$ for all $n$, and for each $n$ there is
a $j$ such that $B^\II_{0,j}$ is invalid with respect to $p_n$ but either not invalid or not declared with
respect to $p_{n-1}$. For otherwise the limit of the $p_n$ would give a run
by $\tilde{\tau}$ for which $\I$ has satisfied the rule $R^\I_0$ but $\II$
has not satisfied $R^\II_0$, contradicting that $\tilde{\tau}$ is winning for $\II$.
Let $q_0\in P_{m_0}$ be such that there is no extension of $q_0$ in $P_{m_0}$
which a new ring $B^\II_{0,j}$ becomes invalid. So, for all sufficiently large $j$
and any $q$ extending $q_0$, the ring $B^\II_{0,j}$ is not invalid.
Likewise we cannot have a sequence $q_1 \subseteq q_2 \subseteq \cdots$
of postions extending $q_0$ such that for each $n$ there is a $j$ so that
$B^\II_{0,j}$ is declared and not an $m$-ring (for any $m$) at $q_n$, but is not declared
at $q_{n-1}$. For in this case each of these rings $B^\II_{0,j}$ would remain
not $m$-rings in the limiting run, which again violates $R^\II_0$. By extending $q_0$
we may assume that for all sufficiently large $j$ and all $q$ extending $q_0$,
$B^\II_{0,j}$, if it is declared at $q$,  is an $m$-ring for some $m$ at $q$.
Note that this ring will remain an $m$-ring for all further extension $r$ of
$q$, since it cannot change to become an invalid ring or an $m'$ ring for any $m'\neq m$. 
Finally, a similar argument shows that we cannot have a sequence $q_1 \subseteq q_2\subseteq \cdots$
extending $q_0$ such that for each $n$ there are two rings $B^\II_{0,j}$ and
$B^\II_{0,j'}$ declared at $q_n$ but not declared at $q_{n-1}$ with $B^\II_{0,j}$
an $m$-ring and $B^\II_{0,j'}$ an $m'$-ring with $m \neq m'$.
By extending $q_0$ further we may asssume that we have a declared $m_1$-ring
$B^\II_{0,j}$ with respect to $q_0$, and such that for all extexions $q$ of $q_0$
and all $j'>j$, if $B^\II_{0,j'}$ is declared at $q$ then it is also an  $m_1$-ring
We define $\tau(m_0)=m_1$.

In general, suppose $\I$ has played $m_0,m_2,\dots, m_{2k}$ in $A$.
Suppose inductively we have defined positions $q_0\subseteq q_1\subseteq \cdots  \subseteq q_{k-1}$.
Let $P_{m_0,\dots,m_{2i}}$, for $i \leq k$, be the set of positions $p$
in $\tilde{A}$ extending $q_{i-1}$ such that for all $g \in (\dom(p)\setminus
\dom(q_{i-1})) \cap B^\I_{i',j}$, for some $i'\leq i$, we have $p(g)=m_{2i'}$. 
We inductively assume that for all sufficiently large $j$ and any
$q$ extending $q_{k-1}$ in $P_{m_0,\dots,m_{2k-2}}$, and for any $i <k$, if $B^\II_{i,j}$ is declared
at $q$ then it is an $m_{2i+1}$-ring, where $m_{2i+1}=\tau(m_0,\dots,m_{2i})$. 
We now consider extensions of $q_{k-1}$ in $P_{m_0,\dots,m_{2k}}$.
By the same arguments as above, there is a $q_k\in P_{m_0,\dots,m_{2k}}$ extending $q_{k-1}$
such that for all large enough $q$ and all extensions $q$ of $q_k$,
if $B^\II_{k,j}$ is declared at $q$, then it is an $m_{2k+1}$-ring for some
fixed integer $m_{2k+1}$. We let $\tau(m_0,\dots,m_{2k})=m_{2k+1}$.
This complete the definition of the strategy $\tau$.

To see that $\tau$ is winning, note that for any run $x$ according to
$\tau$, we have a sequence of positions $q_0, q_1, \dots$ which give
us a run $\tilde x$ consistent with $\tilde \tau$ which follows all
the rules. From the definition of the $q_i$
we have that $f(\tilde{x})=x$. 
Thus since $\tilde \tau$ is winning in $\tilde A$, we
know that $\tilde x \notin f^{-1}(A)$, and so
$x \notin A$, resulting in a win for player $\II$ in the game $A$.
\end{proof}

\section{$\ged$} \label{sec:ged}

We now present a notion of $\ged$ for more general equivalence relations on Polish spaces.

\begin{definition} \label{ged}
Let $\bG$ be a pointclass and $E$ an equivalence relation on a Polish space $X$.
We say $\ged$ holds if for all continuous, onto $\pi\colon \ww \to X$ and all
$A\subseteq X$ which are $E$-invariant $\bG$ sets, we have that
$\pi^{-1}(A)$ is determined.
\end{definition}

First we note that the restriction that the coding maps $\pi$ be onto is necessary
to avoid trivialities. For by the Silver dichotomy, for every Borel equivalence
relation $E$ on $X$ with uncountably many classes, there is a continuous map $\pi
\colon \ww \to X$ such that $x \neq y$ implies $\neg \pi(x) E\, \pi(y)$. Given
$A\subseteq \ww$ in some pointclass $\bG$, let $B\subseteq X$ be the $E$-saturation of $\pi(A)$.
Then $B$ is an $E$-invariant subset of $X$ which also lies in $\bG$ for most pointclasses
(in particular for all pointclasses closed under $\exists^\ww$ or $\forall^\ww$). 
Since $\pi^{-1}(B)=A$ we see that if we allow non-surjective maps $\pi$
in Definition~\ref{ged} then $\ged$ trivially implies $\bG$-determinacy (even restricting
the maps to be continuous).

Another possible variation of Definition~\ref{ged} would be to allow Borel onto maps
$\pi \colon \ww \to X$. Although we do not see that this version trivializes the notion,
it seems more natural to require the coding maps to be as effective as possible.

A common situation is that we wish to impose a set of ``rules'' on the players in the game
$\pi^{-1}(A)$. We next show that a more general version of $\ged$ which allows for
rules imposed on the game is in fact equivalent to $\ged$ as in Definition~\ref{ged}.

\begin{definition}
Let $T\subseteq \omega^{<\omega}$ be a pruned tree (i.e., $T$ has no terminal nodes).
We say $\getd$ holds if for every continuous onto map $\pi \colon [T] \to X$ and every
$E$-invariant $\bG$ set $A\subseteq X$, we have that game $G(\pi^{-1}(A),T)$ with
payoff set $\pi^{-1}(A)$ and rule set $T$ is determined. 
\end{definition}

\begin{theorem}
For every pointclass $\bG$, for every equivalence relation $E$, we have that $\ged$ implies 
$\getd$ for every $T$.
\end{theorem}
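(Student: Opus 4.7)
The plan is to reduce $\getd$ to $\ged$ by extending the coding map $\pi\colon [T]\to X$ to a continuous surjection $\tilde\pi\colon\ww\to X$ whose preimage game transfers determinacy back to the rule game $G(\pi^{-1}(A),T)$.

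First, since $T$ is pruned, I fix for each $s\in T$ a canonical successor $c(s)\in\omega$ with $s\concat c(s)\in T$. Define a continuous retraction $\rho\colon\ww\to [T]$ recursively by $\rho(y)(n)=y(n)$ when $(\rho(y)\upharpoonright n)\concat y(n)\in T$, and $\rho(y)(n)=c(\rho(y)\upharpoonright n)$ otherwise; so $\rho$ is the identity on $[T]$, hence surjective, and $\tilde\pi=\pi\circ\rho\colon\ww\to X$ is a continuous surjection. Applying $\ged$ to $\tilde\pi$ and the $E$-invariant $\bG$-set $A$ yields that $\tilde\pi^{-1}(A)=\rho^{-1}(\pi^{-1}(A))\subseteq\ww$ is determined.

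Next, suppose $\sigma$ is a winning $\ww$-strategy for player $\I$ on $\tilde\pi^{-1}(A)$ (the case for $\II$ is symmetric). Let $\tilde\rho\colon\omega^{<\omega}\to T$ be the length-preserving partial retraction induced by $\rho$. I define a T-strategy $\sigma_T$ for $\I$ by simulation: at an even-length $s\in T$, reconstruct a shadow $\ww$-history $y_s$ of length $|s|$ by induction on $i<|s|$, letting $y_s(i)=s(i)$ for odd $i$ (where $s(i)$ is $\II$'s legal T-move, which lifts unchanged to $\ww$) and $y_s(i)=\sigma(y_s\upharpoonright i)$ for even $i$. Then compute $a=\sigma(y_s)$ and set $\sigma_T(s)=a$ if $s\concat a\in T$, else $\sigma_T(s)=c(s)$. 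Since $y_s$ is determined by $s$ (and the fixed $\sigma$), $\sigma_T$ is a legitimate T-strategy.

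To verify that $\sigma_T$ wins, fix any T-strategy $\tau$ for $\II$, let $x\in[T]$ be the T-play of $\sigma_T$ against $\tau$, extend $\tau$ to a $\ww$-strategy by $\tau'(y')=\tau(\tilde\rho(y'))$, and let $y$ be the $\ww$-play of $\sigma$ against $\tau'$. A straightforward induction shows $\tilde\rho(y\upharpoonright n)=x\upharpoonright n$ at every stage: at odd steps $\tau'$ plays the legal T-move $\tau(x\upharpoonright n)=x(n)$, and at even steps $\sigma$'s move is either legal from $x\upharpoonright n$ (so matches $x(n)$) or illegal, in which case $\rho$ and $\sigma_T$ both replace it with $c(x\upharpoonright n)$. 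Hence $\rho(y)=x$, and the fact that $\sigma$ wins gives $y\in\tilde\pi^{-1}(A)$, so $x\in\pi^{-1}(A)$ as required. The main technical point is exactly this synchronization invariant: one must design $\rho$ and the shadow update so that the real T-play and the retracted $\ww$-play stay linked despite $\sigma$ being free to play outside $T$.
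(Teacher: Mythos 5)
Your proof is correct, but it takes a genuinely different route from the paper's. The paper extends $\pi$ to a continuous surjection $\pi'$ on all of $\ww$ by a case analysis that \emph{punishes the first rule-violator}: if $x$ first leaves $T$ at $s$ and the neighborhood $N_{s'}$ of its predecessor meets both $\pi^{-1}(A)$ and its complement, then $\pi'(x)$ is sent to a point of the losing payoff for whichever player stepped off $T$; only in the already-decided case (where $N_{s'}\cap[T]$ lies entirely inside or outside $\pi^{-1}(A)$, so no punishing witness exists) does the paper fall back on a retraction. You instead use the uniform coding map $\tilde\pi=\pi\circ\rho$ for a canonical retraction $\rho$, which is cleaner: no case split, no choice of witnesses $x_s,y_s$, and the decided/undecided dichotomy disappears entirely, since $\tilde\pi^{-1}(A)=\rho^{-1}(\pi^{-1}(A))$ is a legitimate instance of $\ged$ exactly as required. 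The price is paid in the strategy transfer: because $\rho$ silently corrects illegal moves rather than penalizing them, you must run the shadow simulation, keeping the raw (possibly illegal) $\sigma$-moves in the shadow history so that $\sigma$ is queried at the positions it actually generated, and then prove the synchronization invariant $\tilde\rho(y\res n)=x\res n$. The paper's transfer is by contrast the direct composition $\ell\circ\sigma'$ applied to the real legal position, with the punishment clause doing the work of showing $\sigma'$ only steps off $T$ when the game is already won. Both are standard devices for games with rules; your verification of the invariant at even and odd stages is complete, and the case for player $\II$ is indeed symmetric as you claim.
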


\begin{proof}
Assume $\ged$ and let $T\subseteq \omega^{<\omega}$ be a pruned tree, and let
$\pi \colon [T] \to X$ be a continuous, onto map. Fix an $E$-invariant $\bG$ set
$A\subseteq X$. We define a continuous onto map $\pi' \colon \ww \to X$
which extends $\pi$. Let $x \in \ww \sm [T]$, and we define $\pi'(x)$.
Let $s$ be the least initial segment of $x$ with $s \notin T$. Let $s'= s\res (|s|-1)$.
We consider two cases. First suppose that $N_{s'} \cap \pi^{-1}(A)\neq \emptyset$
and $N_{s'} \cap \pi^{-1}(X\sm A)\neq \emptyset$.  Fix $x_s$, $y_s$ in $[T]$
with $\pi(x_s)\in A$ and $\pi(y_s) \notin A$. If $|s|-1$ is even (i.e.  player $\I$
was responsible for first violating the rules) then we set $\pi'(x)=\pi(y_s)$.
Likewise, if $|s|-1$ is odd, we set $\pi'(s)=\pi(x_s)$. Next suppose that
$N_{s'} \cap [T] \subseteq \pi^{-1}(A)$ or $N_{s'} \cap [T] \subseteq \pi^{-1}(X\sm A)$.
Note that the game is essentially decided at this point, so our intention is to ignore
further violations of the rules. In this case let $\pi'(x)= \pi(\ell(x))$ where
$\ell \colon \ww \to [T]$ is a fixed Lipschitz continuous retraction of $\ww$ to $[T]$.
We clearly have that $\pi'$ is continuous and extends $\pi$.

By the assumption of $\ged$, the game ${\pi'}^{-1}(A)\subseteq \ww$ is determined. 
Say without loss of generality that $\sigma'$ is a winning strategy
for $\I$ in ${\pi'}^{-1}(A)$. Let $\sigma = \ell \circ \sigma'$ (as $\ell$ is
Lipschitz, we may view $\sigma'$ as defined on sequences $s \in \omega^{<\omega}$).
We show that $\sigma$ is winning for $\I$ in $G(\pi^{-1}(A),T)$. 
Since $\sigma =\ell \circ \sigma'$, $\I$ following $\sigma$ will never 
first move off of the tree $T$. So we assume therefore $\II$ always moves in the
tree $T$. Consider a run $x$ of $G(\pi^{-1}(A),T)$ where $\I$ follows $\sigma$ and
both players move in $T$. If for every even $n$ we have that $\sigma'(x\res n)=
\sigma(x\res n)$, then $x$ is also a run of $\sigma'$ and so $x \in \pi^{-1}(A)$.
Suppose that there is a least (even) $n$ so that $\sigma'(x \res n) \neq \sigma(x\res n)$,
that is $(x\res n)\conc \sigma'(x\res n) \notin T$. Let $s'= x\res n$
and $s= s' \conc \sigma'(s')$. 
We cannot be in the first case above
(that is, both $N_{s'} \cap \pi^{-1}(A)$ and  $N_{s'} \cap \pi^{-1}(X\sm A)$
are non-empty), as otherwise $\pi'(x) = y_{s'} \notin A$, and would
be a loss of $\I$ in $\pi'^{-1}(A)$, a contradiction. In the second case we have
either $N_{s'}\cap [T] \subseteq \pi^{-1}(A)$ or $N_{s'}\cap [T] \subseteq \pi^{-1}(X\sm A)$.
We cannot have that $N_{s'}\cap [T] \subseteq \pi^{-1}(X\sm A)$ since then
$N_{s'}\subseteq \pi'^{-1}(X\sm A)$, which contradicts $\sigma'$ being winning for $\I$.
So we have $N_{s'}\cap [T] \subseteq \pi^{-1}(A)$ and so since $\sigma=\ell\circ \sigma'$,
$x \in N_{s'}\cap [T]\subseteq \pi^{-1}(A)$, and so $\I$ has won the run following $\sigma$.
\end{proof}

\section{Subshifts of finite type} \label{sec:ss}

In this section we consider $\ged$ where $E$ is the equivalence relation corresponding
to a subshift $X\subseteq 2^\Z$ of finite type. Recall this means that there is a finite set
of ``forbidden words'' $w_1,\dots,w_e \in 2^{<\omega}$ and $X= \{
x \in 2^\Z\colon \forall k \in \Z\ \forall \ell \leq e\ x\res [k, k+ |w_\ell|]\neq w_\ell \}$,
where $|w|$ denote the length of the word $w$.

\begin{theorem} \label{thm:subshift}
Let $E$ be the shift  equivalence relation on a subshift $X\subseteq 2^\Z$ of finite type,
and assume $E$ has uncountably many classes. Then for all reasonable
pointclasses $\bG$, $\ged$ implies $\bG$-determinacy.
\end{theorem}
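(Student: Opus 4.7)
The plan is to adapt the proof of Theorem~\ref{thm:ga} to the subshift setting, using the combinatorial richness of $X$ forced by the hypothesis of uncountably many classes in order to embed a free-shift encoding inside $X$. First I would extract a ``free block'' structure: since $X$ is an SFT with uncountably many shift-orbits, $X$ has positive entropy, and standard SFT coding yields a positive integer $N$ and two distinct allowed words $u_0, u_1 \in 2^N$ such that every bi-infinite concatenation $\cdots u_{\xi(-1)} u_{\xi(0)} u_{\xi(1)} \cdots$, for $\xi \in 2^\Z$, lies in $X$. (Concretely, one finds two distinct returning cycles of a common length in the finite directed graph representing $X$ as a $1$-step SFT, possibly after appending a fixed bridging suffix.) This gives a continuous injection $\iota \colon 2^\Z \hookrightarrow X$ intertwining the shift $\sigma_1$ on $2^\Z$ with the shift $\sigma_N$ on $X$.

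Next I would replay the argument of Theorem~\ref{thm:ga} taking $G = \Z$, which is weakly amenable via the equality relation and the Folner sets $A_n = [-n,n]$ (passed to a fast-growing subsequence to ensure $\abs{A_{n+1} \setminus A_n}/\abs{A_{n+1}} \to 1$). The rings $B^\I_{n,j}, B^\II_{n,j} \subseteq \Z$ are defined exactly as before, but each integer $k$ in such a ring is now reinterpreted as the starting position of an $N$-block in the player-built $\tilde x \in X$: Player $\I$'s $n$th move $m$ in the game on $A$ is to be encoded by placing the word $u_m$ at a density-$1$ subset of the $N$-block positions in $B^\I_{n,j}$ as $j \to \infty$, and similarly for $\II$. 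I define the rule sets as
\[
R^\I_n = \setof{\tilde x \in X \suchthat \exists m\, \exists \alpha \in \setof{0,\dots,N-1}\ \lim_{j\to\infty} \frac{\abs{\setof{k \in B^\I_{n,j} \suchthat \tilde x \res [k+\alpha, k+\alpha+N-1] = u_m}}}{\abs{B^\I_{n,j}}} = 1}
\]
and $R^\II_n$ analogously. The existential over $\alpha$ is essential: a shift of $\tilde x$ by a single position shifts the alignment of $N$-blocks by one modulo $N$, so without this quantifier the rules would fail to be $E$-invariant. The payoff $\tilde A$ and the decoding function $f$ are then defined word-for-word as in Theorem~\ref{thm:ga}; uniqueness of the decoded $m$ follows from $u_0 \neq u_1$.

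The main obstacle will be verifying that $R^\I_n$ and $R^\II_n$ are $E$-invariant under all shifts $\sigma^k$, $k \in \Z$, not merely those in $N\Z$. Given $\tilde x \in R^\I_n$ witnessed by $(m,\alpha)$, the shifted point $\sigma^k \tilde x$ has its $N$-blocks relocated, and the candidate witness becomes $(m, (\alpha - k) \bmod N)$ together with the translated ring $B^\I_{n,j} - k$. The translated ring differs from $B^\I_{n,j}$ by at most $O(\abs{k})$ elements, which is negligible compared to $\abs{B^\I_{n,j}}$ by the Folner property of the $A_n$; the verification then proceeds via the same three-part set-inclusion estimate as in the invariance argument of Theorem~\ref{thm:ga} (classes pushed to intersect $A_{c_{n,j}-1}$, classes pushed outside $A_{c_{n,j}}$, classes meeting a class that fails to specify $m$), with an additional bounded error term from the alignment shift, all of which vanish as $j \to \infty$.

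The remaining steps are routine adaptations: the rule sets still have complexity $\bP^0_3$ (a finite disjunction over $\alpha$ is harmless), so $\tilde A \in \bG$ by reasonableness; a continuous surjection $\pi \colon \omega^\omega \to X$ is built so that the players' plays specify the block $u_{x(n) \bmod 2}$ at the $n$th $N$-block position of $\Z$ under a fixed enumeration, extended to be onto $X$ on a negligible set of inputs; and the extraction of a winning strategy $\tau$ for $A$ from a winning strategy $\tilde \tau$ for $\tilde A$ follows verbatim the ``declared ring / $m$-ring / invalid ring'' bookkeeping from the end of the proof of Theorem~\ref{thm:ga}.
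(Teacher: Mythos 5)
Your high-level architecture (a $\Z$-indexed block encoding, invariant rule sets $R^\I_n, R^\II_n$, a decoding function $f$, the same payoff formula for $\tilde A$, and the strategy transfer) does match the paper's, and you correctly identify the alignment problem for shift-invariance. But there are two genuine gaps, both located exactly where the paper's proof does its real work. First, your decoding is not well-defined: ``there exist $m$ and $\alpha$ such that $u_m$ occurs at alignment $\alpha$ in a density-one set of blocks'' can be witnessed simultaneously by $(0,\alpha)$ and $(1,\alpha')$ whenever $u_1$ is a cyclic rotation of $u_0$ (e.g.\ $u_0=01$, $u_1=10$ on $\cdots 0101\cdots$), and nothing in your choice of $u_0,u_1$ rules this out. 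Second, and more seriously, a \emph{fixed} pair of freely concatenable words $u_0,u_1$ coming from one double loop cannot be forced on the opponent: in the de Bruijn graph of the SFT there may be several double loops with no path from one to another, and the opponent can legally steer the bi-infinite path into a component from which your designated double loop is unreachable, after which you cannot place $u_0$ or $u_1$ at all, let alone at density one. This is precisely why the paper's rule (Equation~\ref{RIn}) quantifies existentially over the double loop $(C_0,C_1)$ \emph{inside} each ring $B^\I_{n,j}$ and decodes not by ``which word appears'' but by ``which of the two cycles, under a fixed global ordering, is the majority loop,'' with the $2/3$ and $1/10$ thresholds and a quantitative argument showing the double loop and the majority designation are unique for small $\epsilon$. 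That scheme is alignment-free and robust to the opponent changing components, which your fixed-word scheme is not.

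Relatedly, the surjectivity of $\pi$ cannot be dispatched by ``extend $\pi$ to be onto on a negligible set of inputs.'' Since $\pi$ must be onto $X$, players must be allowed to play arbitrary legal words of $2^N$, and then one must handle a player who plays a word that is bad to the left or to the right (a dead end of the SFT in one direction), or who drags the construction toward such words. The paper's two-sided construction (building $y$ on $[-nN,nN)$ with the intervals $A_i$ alternating control of the two ends), the ``good to the left/right'' analysis, the device of transferring control of both directions to the non-offending player, and Claim~\ref{gbd} all exist to solve this; it is a substantial portion of the proof, not a routine adaptation. Your approach could likely be repaired, but the repair essentially amounts to replacing your fixed-word encoding by the paper's majority-loop encoding and adding the good/bad-direction case analysis.
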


\begin{proof}
Let $w_1,\dots,w_e$ be the forbidden words of the subshift $X$.
Fix $N \geq \max \{ |w_i|\colon 1\leq i \leq e\}$. Let $G$ be the finite directed graph, the de Bruijn
graph, corresponding to the forbidden words and $N$. That is, the vertices of $G$
are elements of $2^N$ which don't contain any forbidden words, and
$(u,v)$ is an edge in $G$ iff $v\res [0,N-1]=u\res [1,N]$.

Throughout the rest of this section, $G$ will denote this fixed de Bruijn graph (and not
a countable group).

\begin{definition}
Let $u\in G$. We say $u$ is {\em good to the right} there are uncountably many
directed paths $p=(u, u_1,u_2,\dots)$ in $G$ starting from $u$. 
Likewise we say $u$ is good to the left if there are uncountably many
$(\dots, u_2,u_1,u)$ paths starting from $u$ and moving in the reverse
direction in $G$ (i.e., $(u_{n+1},u_n)$ is an edge in $G$).
\end{definition}

If $v$ is good to the right and there is a path $u=u_0,u_1,\dots,u_n=v$
from $u$ to $v$ in the graph $G$, then $u$ is good to the right as well.
Likewise in this case, if $u$ is good to the left, then $v$ is also good
to the left. This simple observation will be used throughout.

Note that an element of $X$ can be identified with a bi-infinite path through $G$.

\begin{definition}
A {\em double loop} in a directed graph $G$ is a directed subgraph consisting
of the union of two cycles with vertex sets $C_1$ and $C_2$ such that $C_1\cap C_2 \neq \emptyset$
and $C_1\neq C_2$. See Figure~\ref{fig:dl}.
\end{definition}

\begin{figure}
\begin{tikzpicture}
  \path [draw=black,postaction={on each segment={mid arrow=black}}]
  (0,0) node {$C_1$} circle(1)
  (1,0) node[below=0.7] {$C_2$} arc (90:-180:1 and 1)
  ;
\end{tikzpicture}
\caption{a double loop}\label{fig:dl}
\end{figure}
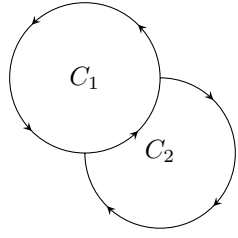

\begin{lemma} \label{edl}
If $G$ is a finite directed graph with uncountably many paths,
then $G$ contains a double loop.
\end{lemma}

\begin{proof}
Let $u_0$ be a vertex in $G$ for which there are uncountably many directed paths in $G$ starting
from $u_0$. Inductively define $(u_0,u_1,\dots,u_n)$, a directed path in $G$, such that
there are uncountably many directed paths in $G$ starting from $u_n$. We can clearly continue this
construction until we reach  a least $n$ so that $u_n \in \{ u_0,\dots,u_{n-1}\}$ (since $G$ is finite). 
Let $j\leq n-2$ be such that $u_n=u_j$, 
Let $p_0$ denote this directed path $(u_0,u_1,\dots, u_n)$. Let $\ell_0=(u_j,u_{j+1},\dots, u_{n-1})$
be the ``loop'' portion of $p_0$. 
There must be a vertex $v_0=u_k$ of $ \ell_0$
such that there is an edge $(v_0,v_1)$ in $G$ where $v_1 \neq u_{k+1}$ and such that there are uncountably many
directed paths in $G$ starting from $v_1$. For if not, then the only directed paths in $G$ starting from
$v_0$ would be, except for a countable set, those which continually follow the loop $\ell_0$.
This is a contradiction to the definition of $v_0$ as there are only countably many such paths.
If $v_1\in p_0$ then we have a double loop in the graph. If not, then we repeat the process starting at $v_1$
forming a path $p_1=(v_0,v_1,\dots, v_{m-1},v_m)$ where $v_m \in p_0\cup (v_0,\dots,v_{m-1})$.
If $v_m\in p_0$, then we have a double loop in $G$. Otherwise $v_m=v_i$ for some $i\leq m-2$, and
$\ell_1=(v_i,\dots,v_{m-1})$ gives another loop in $G$. Since $G$ is finite,
we must eventually produce a double loop in $G$. 

\begin{figure}
\begin{tikzpicture}
  \path [draw=black,postaction={on each segment={mid arrow=black}}]
  (-2,-1)  node[right=0.25] {$p_0$} arc (-180:0:0.5 and 0.25)
  (0, 0) node[below=0.7] {$\ell_0$} arc (90:-270:1 and 1)
  (1,-1)  node[right=0.25] {$p_1$}arc (-180:0:0.5 and 0.25)
  (3, 0) node[below=0.7] {$\ell_1$} arc (90:-270:1 and 1)
  (4,-1)  arc (-180:-90:0.5 and 0.25)
  (5, -1) node {$\cdots$}
  (5.5,-1.25)  node [above] {$p_n$} arc (-90:0:0.5 and 0.25)
  (7, 0) node[below=0.7] {$\ell_n$} arc (90:-270:1 and 1)
  (7, -2) arc (0:-180:2 and 0.5)
  ;
\end{tikzpicture}
\caption{}
\end{figure}

\end{proof}

Note that every vertex in a double loop is good to the right and left. 
Returning to the proof of the theorem, by our assumptions, the de Bruijn graph $G$ for the subshift has 
a double loop, and thus $G$ has a vertex which is good to the right and left.

We now define the continuous onto map $\pi\colon [T] \to X$, where $T$ will be be a pruned tree on
$\omega$ which we will be implicitly defining as we describe $\pi$. 
Let $x \in \ww$, and we describe the conditions on $x$ which give $x \in [T]$, and in this case
describe $\pi(x)\in X\subseteq 2^\Z$. First, view every digit $i$ as coding
a binary sequence $u_i$ of length $N$ (recall $N$ was maximum size of the forbidden words,
and was used to construct the de Bruijn graph $G$). Fix a fast growing sequence $0=b_0<b_1<\cdots$,
with say $\lim_i\frac{\sum_{j<i}b_j}{b_i}=0$, and with $N | (b_{i}-b_{i-1})$ for all $i$.
Let $A_i=[b_{i-1},b_{i})$ for $i \in \omega^+$.
Let also $A_{-i}=-A_i-1=\{ -a-1\colon a \in A_i\}$.

Let $n \mapsto u_n$ be an onto map from $\omega$ to $2^N$ (binary sequences of length $N$).
We first define a preliminary map $\pi'$. Given $x \in \ww$ we define $y'=\pi'(x)$
as follows. Given the first $2n$ digits of $x$, that is, $x \res 2n$, we define
$y' \res [-nN,nN)$. Assume $y'\res [-nN,nN)$ has been defined, and from $x(2n)$, $x(2n+1)$
we extend to $y'\res [-(n+1)N,(n+1)N)$. If $nN\in A_i$ for $i$ odd, then
$y'\res [nN, (n+1)N)=u_{x(2n)}$ and $y'\res [-(n+1)N, -nN)=u_{x(2n+1)}$.
If $i$ is even, then we let $y'\res [nN, (n+1)N)=u_{x(2n+1)}$ and 
$y'\res [-(n+1)N, -nN)=u_{x(2n)}$.
Let $\tilde{T}\subseteq \omega^{<\omega}$ be the tree of sequences $x \res 2n$ such that
the corresponding sequence $y' \res [-nN,nN)$ contains no forbidden words.
Note that  $[\tilde{T}]= {\pi'}^{-1}(X)$. Let $F=[\tilde{T}]$ be the closed set
corresponding to $\tilde{T}$, and let $T\subseteq \tilde{T}$ be the pruned tree with
$[T]=[\tilde{T}]=F$.

We now define $y=\pi(x)$.
If $\I's$ first move $u_{x(0)}$ is bad in both directions, we set $\pi(x)=\pi'(x)$.
If for all $n$ we have that $u_{x(n)}$ is good to the left and right,
then we also set $\pi(x)=\pi'(x)$. Otherwise, say $n_0$ is least such that $u_{x(2n_0)}$
or $u_{x(2n_0+1)}$ is not good to both the left and right.

Suppose that $u_{x(2n_0)}$ is not good in both directions, that is, $\I$ has made the
first such move. Note that $u_{x(2n_0)}$ must be good in one direction as
we are assuming that $\I$'s first move $u_{x(0)}$ is good in at least one direction.
In this case we ignore the remainder of $\I$'s moves in the game, and we alternate concatenating
$\II$'s moves on the left and right. This produces $y=\pi(x)$. If $\II$
first makes the move $u_{x(2n_0+1)}$ which is not good in both direction, we similarly ignore
the future moves of $\II$ and alternating concatenating $\I$'s move to the left and right. 
This defines $y=\pi(x)$ in all cases.

Let $A\subseteq 2^\omega$ be a $\bG$ set. We define the auxiliary payoff
set $\tA \subseteq X$ to which we will apply the hypothesis of $\ged$
for the $\pi$ constructed above. The set $\tA$ will be a shift-invariant
$\bG$ set. The construction of $\tA$ will be similar to that of
Theorem~\ref{thm:ga}. 
Say $x \in [T]\subseteq \omega^\omega$ is the play of the game $\pi^{-1}(\tA)$
where both players have followed the rules (i.e., no forbidden words appear
in $y=\pi(x)$).

First, if there  exists a subword of length $N$ in $y$ which is bad to
right and also a subword which is bad to the left, 
then $\I$ loses (this is part of the definition of $\tA$).
In the remaining cases we assume that
there is at least one direction so that all subwords of $y$ are good in that direction.

Recall the sets $A_i$ have been defined for $i\in \Z\setminus\{ 0\}$.
Let $A'_i$ be defined (for $i \in \Z\sm \{0\}$) by $A'_i=A_i$ if every subword of $y$ is good to the right,
and otherwise let $A'_i=A_{-i-1}$. 
As in the proof of Theorem~\ref{thm:ga}
we partition $\omega$ into $\omega$-many pairwise disjoint sets $B^\I_{n,j}$,
$B^\II_{n,j}$. These sets are defined exactly as before using the sets $A'_i$ for $i>0$,
except (for notational convenience) we let the $B^\I_{n,j}$ correspond
to odd $i$, and the $B^\II_{n,j}$ to even $i$.

In order to define the rule sets for the game
we will make use of the following notion.

\begin{definition}
Given $y \in X$ and a double loop $(C_0,C_1)$ in $G$, and given integers $a<b$,
we say $y\res [a,b)$ {\em traces the double loop with pattern}  $s\in 2^{<\omega}$
if $v_0=y\res[a,a+N) \in C_0 \cap C_1$, and if $v_i=y\res [a+i, a+i+N)$, then
the sequence $v_0,v_1,\dots,v_{b-a-N+1}$ of nodes in $G$ is a path in $G$ of the form
$C'_{s(0)}\conc C'_{s(1)} \conc \cdots C'_{s(|s|-1)}$ where $C'_0$, $C'_1$
are the same cycles as $C_0$, $C_1$ except we start at the vertex $v_0$.
If $|s^{-1}(i)| >|s^{-1}(1-i)|$, we call $C_i$ the {\em majority loop} and
$C_{1-i}$ the {\em minority loop}.
\end{definition}

We define the conditions $R^\I_n$, $R^\II_n$ and the decoding
function $f \colon X\to 2^\omega$ as follows.
We fix an ordering on the cycles of $G$ (the de Bruijn graph) as so write each
double loop in $G$ as $(C_0,C_1)$ where $C_0 <C_1$. This makes the representation of
each double loop unique. In the definition of $R^\I_n$ (and likewise for
$R^\II_n$) we require that  the following hold:

\begin{align} \label{RIn}
y \in R^\I_{n}  \leftrightarrow & \exists i \in \{ 0,1\}\ \forall \epsilon >0\
\exists j_0\ \forall j \geq j_0\ \exists \text{ double loop } (C_0,C_1)
\text{ such that the following hold:}
\\ & \nonumber \exists [a,b) \subseteq B^\I_{n,j}\ (b-a)> (1-\epsilon) |B^\I_{n,j}|
\text{ and } y\res[a,b) \text{ traces } (C_0,C_1) \text{ with pattern } s\in 2^{<\omega}
\\ \nonumber  & |s^{-1}(i)|\geq (1-\epsilon) \frac{2}{3} |s|
\\ &  \nonumber |s^{-1}(1-i)|\geq (1-\epsilon) \frac{1}{10} |s|
\end{align}

Likewise we define $R^\II_n$ using the $B^\II_{n,j}$ sets.  If there is some direction so that every subword of $y$ is good in that direction and $y \in \bigcap_n R^\I_n \cap \bigcap_n
R^\II_n$, then we define the decoding map $f$ at $y$ by $f(y)(2n)$ is
the witness $i \in \{ 0,1\}$ to $y \in R^\I_n$, and likewise
$f(y)(2n+1)$ is the witness to $y \in R^\II_n$.

To ensure $f$ is
well-defined, we note that if $i$ witnesses $y \in R^\I_n$, then $1-i$
does not. 
This is because for small enough $\epsilon$, for all large enough $j$ if $y$ traces
$(C_0, C_1)$ with pattern $s$ over a subinterval $I$ of length $(1-\epsilon) |B|$ of some fixed
$B=B^\I_{n, j}$ then the double loop $(C_0,C_1)$ is unique (for this $j$).
Say $C_i$ ($i\in \{ 0,1\}$) is the majority loop. The majority loop $C_i$ is traced say $M$ times, where 
$M\geq (1-\epsilon) \frac{2}{3}|s|$ many times.
The minority loop  is traced say $m$ times, where
$m\leq (\frac{1}{3}+\frac{2}{3}\epsilon)|s|+ \epsilon |B|$. The condition $m \leq M$ becomes
$\epsilon |B|\leq (\frac{1}{3}-\frac{4}{3}\epsilon)|s|$. Since $|s|\geq (1-\epsilon)\frac{|B|}{|G|}$
it suffices to have $\epsilon < (\frac{1}{3}-\frac{4}{3}\epsilon) \frac{(1-\epsilon)}{|G|}$.
Clearly this is satisfied for $\epsilon$ small enough. 
On the other hand, $m \geq (1-\epsilon)\frac{1}{10}|s|\geq (1-\epsilon)^2\frac{1}{10}\frac{|B|}{|G|}$.
Any other loop can be traced only in $B\sm I$, and so can be traced at most $\epsilon |B|$ many times.
But $(1-\epsilon)^2\frac{1}{10}\frac{|B|}{|G|}\geq \epsilon |B|$ holds for all small enough $\epsilon$.
So, there is a fixed $\epsilon$, independent of $j$, so that if $y\res B^\I_{n,j}$
satisfies Equation~\ref{RIn} for this $\epsilon$ then the double loop $(C_0,C_1)$ is well-defined
as is the integer $i \in \{0,1\}$ with $C_i$ being the majority loop.

On the invariant set of $y$ such that there is at least one direction so that all
subwords are good in that direction we define $\tA$
by:
\[
\tilde A = \bigcup_{n \in \omega} \left( \bigcap_{i \leq n} R^\I_i
\setminus R^\II_n\right) \cup \left(\bigcap_{n \in \omega} \left(R^\I_n
\cap R^\II_n\right)\cap f^{-1}(A)\right)
\]

This completes the definitions of $\pi$, and the auxiliary game $\tilde{A}$.

We next observe that the auxiliary game $\tilde{A} \subseteq 2^\Z$ is shift invariant.
Given $y \in 2^\Z$, the case split as to whether there is a direction so that all
all subwords of $y$ are good in that direction is clearly shift invariant (and the set
of such directions is also invariant). In the case where there is at least one such good direction,
whether $y \in \tilde A$ is decided by
putting down the sets $A'_i$ for $i \in Z\sm \{ 0\}$,  
using these to define the sets $B^\I_{n,j}$, $B^\II_{n,j}$, then defining the sets
$R^\I_n$, $R^\II_n$ as in  Equation~\ref{RIn} which gives the decoding function $f$
and finally asking whether $f(y)\in A$. It suffices to show that the sets $R^I_n$, $R^\II_n$
are invariant (in that $y \in R^I_n$ iff $m \cdot y \in R^I_n$ for all $m \in \Z$), as this implies
that the decoding function $f$ is also invariant. The intervals $B^\I_{n,j}(m\cdot y)$
as defined for the shift $m \cdot y$ are just the shifts $m \cdot B^\I_{n,j}(y)$ of the
corresponding sets $B^\I_{n,j}(y)$ for $y$. In particular,
$|B^\I_{n,j}(y) \cap B^\I_{n,j}(m\cdot y)| / |B^\I_{n,j}(y)|$ tends to $1$
as $j$ goes to infinity. Thus the asymptotic condition of Equation~\ref{RIn}
holds for $y$ iff it holds for $m \cdot y$.
The value of $i$ in Equation~\ref{RIn}
is therefore the same for both $y$ and $m\cdot y$. This shows that $f(y)=f(m\cdot y)$
and so $y \in \tilde A$ iff $m\cdot y \in \tilde A$.

By the assumption of $\ged$ the game $\pi^{-1}(\tilde{A})$ on $\omega$ is determined.
First consider the case where $\I$ has a winning strategy $\sigma$ in $\pi^{-1}(\tilde A)$.

\begin{claim} \label{gbd}
If $x(0),x(1),\dots, x(2n)$ is a position of the game $\pi^{-1}(\tA)$ consistent with $\sigma$
in which all of $\II$'s moves
$u_{x(2k+1)}$, $k <n$, are good in both directions, then $u_{x(2n)}$ is good in both directions.
\end{claim}

\begin{proof}
We first note that $u_{x(2n)}$ is not bad in both directions. If $n>0$ this is clear as
$u_{x(0)}$ is good in both directions and the last move is legal, and so good in (at least) the direction
pointing back to $u_{x(0)}$. If $n=0$, and $u_{x(0)}$ is bad in both directions,
then there is no direction for which every subword of the resulting $y$ is good, which is a loss for $\I$,
a contradiction. Suppose now that $u_{x(2n)}$ were bad in exactly one direction, say
bad to the right. If the definition of $\pi$ we gave full control of all future moves to $\II$
(we ignored $\I$'s moves after this point). But $\II$ can now play moves to
violate $R^\I_0$ and thus produce a loss for $\I$, a contradiction. For example, $\II$
can move (for each of the two directions) to a cycle of $G$, and simply trace this cycle
forever. 
\end{proof}

We will construct a position $p_0$ of odd length which is consistent with $\sigma$, so that
$\sigma$ is committed to a particular witness $i_0$ to its following
of $R^\I_n$, which will be our first move.  Fix $\overline{\epsilon}$ small
enough so that for all large enough $j$, if a double loop is traced in
$B^\I_{n, j}$ meeting the conditions of Equation~\ref{RIn} for this
$\overline{\epsilon}$, then that double loop is unique.

Now consider the tree of positions of odd length which
\begin{enumerate}
\item are consistent with $\sigma$,
\item in which player $\II$ has made only moves which are good in both directions,
\item there is some $j$ such that for the partial sequence $y \res [c, d)$ constructed so far, we have $[c, d)
\cap B^\I_{0, j} = B^\I_{0, j}$ and $[c, d) \cap B^\I_{0, j+1} =
\emptyset$,
\item and that $y \res [c, d)$ doesn't satisfy the existence of a double loop as in
Equation~\ref{RIn} on $B^\I_{0, j}$ with $\epsilon=\overline{\epsilon}$.
\end{enumerate}  
This tree must be wellfounded, as a branch could be used to produce a loss for $\I$
which is consistent with $\sigma$, a contradiction.  Let $p_0'$ be a terminal
position in this tree.
Next, we consider the tree of positions of odd length extending $p_0'$ which
\begin{enumerate}
\item are consistent with $\sigma$,
\item in which player $\II$ has made only moves which are good in both directions,
\item there is some $j$ so that for the partial sequence $y \res [c, d)$
constructed so far, we have $[c, d) \cap B^\I_{0, j+1} = B^\I_{0,
j+1}$ and $[c, d) \cap B^\I_{0, j+2} = \emptyset$,
\item if $y \res [c, d)$ traces $(C_0, C_1)$ in $B^\I_{0, j}$ and $(C_0', C_1')$
in $B^\I_{0, j+1}$ satisfying the conditions of Equation~\ref{RIn} with $\epsilon=\overline{\epsilon}$ and the majority loop which $y\res [c, d)$ traces in $B^\I_{0,
j}$ is $C_i$, then the majority loop $y \res [c, d)$ traces in $B^\I_{0, j+1}$ is
$C_{1-i}'$.
\end{enumerate}  
Again, this tree must be wellfounded, since a branch would result in a
$y$ which fails to satisfy $R^\I_0$.  Let $p_0$ be any terminal node
of this tree and let $j_0$ be such that if $y \res[c_0,d_0)$ is the portion of $y$ constructed at $p_0$, then
$[c_0,d_0)\cap B^\I_{0,j_0}=B^\I_{0,j_0}$ and $[c_0,d_0)\cap B^\I_{0,j_0+1}=\emptyset$. 
Notice that for any extension $q$ of $p_0$ which is
consistent with $\sigma$ and in which $\II$'s moves are good in both directions,
with $y \res [c, d)$ the portion of $y$
constructed at $q$, we will have that $y \res [c, d)$ traces a
double loop in each $B^\I_{0, j}$ for all $j\geq j_0$,
satisfying the conditions in Equation~\ref{RIn} with $\epsilon=\overline{\epsilon}$.  Furthermore,
there will be some $i_0 \in \{0, 1\}$ so that for any 
$j\geq j_0$, if $y \res [c, d)$ traces $(C_0, C_1)$ in $B^\I_{0,j}$,
then $C_{i_0}$ will be the majority loop and $C_{1-i_0}$
will be the minority loop.

Our first move in the game $A$ will be to play this $i_0$.
Next suppose our opponent plays $i_1$.  We will again consider a tree of positions consistent with $\sigma$.
We consider only positions of odd length extending $p_0$ which 
\begin{enumerate}
\item are consistent with $\sigma$,
\item in which player $\II$ has made only moves which are good in both directions,
\item in each new $B^\II_{0, j}$ player $\II$ has moved as quickly as possible to the nearest double loop $(C_0, C_1)$ in $G$, and in $B^\II_{0, j}$ declares $C_{i_1}$ the majority loop and $C_{1-i_1}$ the minority loop (say by tracing the minority loop sufficiently many times first, then the majority loop for the rest of the time) to satisfy the conditions in Equation~\ref{RIn},
\item for the partial sequence $y \res [c, d)$ constructed so far, we have for some $j$ that
$[c, d) \cap B^\I_{1, j} = B^\I_{1, j}$ and $[c, d) \cap B^\I_{1, j+1} =\emptyset$,
\item and that $y \res [c, d)$ doesn't satisfy the conditions of Equation~\ref{RIn} on $B^\I_{1, j}$ with
$\epsilon=\overline{\epsilon}$.
\end{enumerate}  
This tree must be wellfounded, since a along a branch, $\II$ would be satisfying $R^\II_0$ (with $i_1$) but player $\I$ isn't satisfying $R^\I_1$, which would be a loss consistent with $\sigma$, a contradiction.  Let $p_1'$ be a terminal position in this tree.
Next, we consider the tree of positions of odd length extending $p_1'$ which
\begin{enumerate}
\item are consistent with $\sigma$,
\item in which player $\II$ has made only moves which are good in both directions,
\item in each new $B^\II_{0, j}$ player $\II$ has moved as quickly as possible to the nearest double loop $(C_0, C_1)$ in $G$, and in $B^\II_{0, j}$ declares $C_{i_1}$ the majority loop and $C_{1-i_1}$ the minority loop to satisfy the conditions in Equation~\ref{RIn},
\item there is some $j$ so that for the partial sequence $y \res [c, d)$
constructed so far, we have $[c, d) \cap B^\I_{1, j+1} = B^\I_{1,
j+1}$ and $[c, d) \cap B^\I_{1, j+2} = \emptyset$,
\item if $y \res [c, d)$ traces $(C_0, C_1)$ in $B^\I_{1, j}$ and $(C_0', C_1')$
in $B^\I_{1, j+1}$ satisfying the conditions of Equation~\ref{RIn} with $\epsilon=\overline{\epsilon}$
and the majority loop which $y\res [c, d)$ traces in $B^\I_{1,j}$ is $C_i$,
then the majority loop $y \res [c, d)$ traces in $B^\I_{1, j+1}$ is
$C_{1-i}'$.
\end{enumerate}  
This tree also must be wellfounded, since $\I$ must commit to a
particular $i_2$ with which to satisfy $R^\I_1$, and a branch through
this tree would have $\I$ changing its answer infinitely often,
resulting in a loss consistent with $\sigma$.  Let $p_1$ be any
terminal position in this tree, and let $i_2$ be the digit which $\I$
will declare in each new $B^\I_{1, j}$ from $p_1$ onwards.

We play the move $i_2$ in $A$ and continue playing in this manner,
using each new move $i_{2n+1}$ by our opponent to satisfy in the
auxilliary game $\pi^{-1}(\tA)$ an additional rule $R^\II_n$, and then
by moving to terminal positions $p_n$ in wellfounded trees, fix digits
$i_{2n}$ which we will play in $A$.  By the construction of these
positions $p_n$, we will have that if $y$ is the resulting element of
$X$ corresponding to the sequence of moves $i_0, i_1, \cdots$ in $A$,
then we will have $f(y)(n) = i_{n}$, and that $y \in \bigcap_n (R^\I_n
\cap R^\II_n)$, so that our strategy is winning for $\I$ in $A$.

Consider now the case where $\II$ has a winning strategy $\tau$
in the game $\pi^{-1}(\tA)$. The argument is similar to the case above
where $\I$ had the winning strategy, so we just sketch the differences. 
By Lemma~\ref{edl} there is double loop in the graph $G$, and therefore there
is a word of length $N$ which is good in both directions. Have $\I$ play in $\tilde{A}$ such a word as
their first move $u_{x(0)}$. Suppose $\I$ plays $i_0$ in the game $A$.
We proceed as in the argument above having $\I$ move as quickly as possible
(only making moves which are good in both directions) to
a double loop within each $B^I_{0,j}$, and moving to encode $i_0$ within
$B^I_{0,j}$. An analogous claim to Claim~\ref{gbd} shows that as long as $\I$
plays in this manner, $\tau$'s moves are also good in both direction.
since $\I$ is satisfying $R^\I_0$ (with the digit $i_0$), a wellfoundedness
argument as before will produce a position $p_0$ consistent with $\tau$
and a digit $i_1$ so that for all runs $y$ consistent with $\tau$ in which $\I$
plays as just described we have that $y$ satisfies $R^\II_0$ with $i_1$.
Continuing in this manner, as previously, this defines a winning strategy
for $\II$ in $A$.

\section{Conclusion} \label{sec:conclusion}

We have introduced the general notion of $\ged$ for arbitrary
equivalence relations $E$ on Polish spaces and pointclasses $\bG$. Since the definition
involves the use of continuous coding maps from $\ww$ onto $X$ (which seems necessary
to have a reasonable definition), it is not immediately clear to what extent the
structure of the particular Polish space $X$ plays a role. Since all uncountable Polish spaces are Borel
isomorphic, it is reasonable to ask the following:

\begin{question} \label{ques:fq}
Suppose $X$, $Y$ are uncountable Polish spaces, $\varphi\colon X\to Y$ is a Borel
isomorphism, and $E_X$ is a Borel equivalence relation on $X$. Let $E_Y$ be the corresponding
Borel equivalence relation on $Y$, that is $y_1 E_Y\, y_2$ iff $\varphi^{-1}(y_1) E_X\, 
\varphi^{-1}(y_2)$. Then for any pointclass $\bG$ closed under substitution by Borel
functions, countable unions and countable intersections, is it the case that 
$(\bG, E_X)$-determinacy is equivalent to $(\bG,E_Y)$-determinacy?
\end{question}

If $E,F$ are equivalence relation on $X$ with $E\subseteq F$, then
we immediately have that $(\bG,E)$-determinacy implies $(\bG,F)$-determinacy
(because $F$-invariant sets are also $E$-invariant). Since the shift equivalence
relation $E_\Z$ on $2^\Z$ is a subset of the Turing equivalence relation on $2^\Z$,
we have that $(\bG, E_Z)$-determinacy implies $\bG$ Turing-determinacy for any $\bG$.
We have shown that $(\bG,E_\Z)$-determinacy implies full $\bG$-determinacy
for any reasonable $\bG$. We recall that Harrington showed that
$\bS^1_1$ Turing-determinacy implies $\bS^1_1$-determinacy and Woodin
showed that in $L(\R)$, Turing-determinacy is equivalent to full determinacy.
Thus, extending our results to more general equivalence relations is
expected to be a difficult problem. Nevertheless, in Theorem~\ref{thm:subshift}
we extended the result to include subshifts of $2^\Z$ of finite type.

We recall that the Feldman-Moore theorem states that every countable Borel equivalence
relation $E$ on a Polish space is generated by the Borel action
of a countable group $G$ (one can also choose the Polish topology
to make the action continuous). We also recall the result \cite{djk}
that every equivalence relation $E$ generated by the action of a countable group $G$
Borel (equivariantly) embeds into the shift action of $G\times \Z$ on $2^{G\times \Z}$.
Theorem~\ref{thm:ga} applies to shift actions of arbitrary countable groups, so
the problem of passing to general (not necessarily closed) subshifts embodies the general question
of whether $\ged$ implies $\bG$-determinacy. In particular, we can ask:

\begin{question} \label{ques:sq}
For which subshifts (closed, or more generally Borel,
invariant subsets $X$ of $2^\Z$ with the shift map)
of $2^\Z$ do we have that $\ged$ implies $\bG$-determinacy, where
$E$ is the shift equivalence relation restricted to $X$.
\end{question}

If $E$ is generated by the continuous action of a countable group $G$
on a compact $0$-dimensional space $X$, then \cite{djk} shows that
$(X,E)$ equivariantly and continuously embeds into a subshift of $2^{\Z\times G}$.
Thus, given a positive answer to Question~\ref{ques:fq}, the question of whether
$\ged$ implies $\bG$-determinacy reduces to considering the question
for subshifts of $2^G$, for countable groups $G$.

Aside from the observation above on subequivalence relations, it is not clear how the
notion of $(\bG,E)$-determinacy interacts with other aspects of the theory of Borel equivalence relations.
So we ask:

\begin{question} \label{ques:tq}
How does the notion of $\ged$ interact with the notions of Borel reducibility
of equivalence relations, products of equivalence relations, increasing unions
of equivalence relations, etc.?
\end{question}

\end{proof}

\bibliographystyle{amsplain}

\bibliography{equiv}

\providecommand{\bysame}{\leavevmode\hbox to3em{\hrulefill}\thinspace}
\providecommand{\MR}{\relax\ifhmode\unskip\space\fi MR }
% \MRhref is called by the amsart/book/proc definition of \MR.
\providecommand{\MRhref}[2]{%
  \href{http://www.ams.org/mathscinet-getitem?mr=#1}{#2}
}
\providecommand{\href}[2]{#2}
\begin{thebibliography}{1}

\bibitem{djk}
Randall Dougherty, Stephen Jackson, and Alexander~S. Kechris, \emph{The
  structure of hyperfinite borel equivalence relations}, Transactions of the
  American Mathematical Society \textbf{341} (1994), no.~1, 193--225.

\bibitem{Harrington1978}
Leo Harrington, \emph{Analytic determinacy and {$0^\#$}}, Journal of Symbolic
  Logic \textbf{43} (1978), no.~4, 685--693.

\bibitem{KechrisBook}
Alexander~S. Kechris, \emph{Classical descriptive set theory}, Graduate Texts
  in Mathematics, vol. 156, Springer-Verlag, New York, 1995.

\bibitem{KechrisMarks}
Alexander~S. Kechris and Andrew Marks, \emph{Descriptive graph combinatorics},
  to appear.

\bibitem{Marks2016}
Andrew Marks, \emph{A determinacy approach to {B}orel combinatorics}, Journal
  of the American Mathematical Society \textbf{29} (2016), no.~2, 579--600.

\bibitem{Martin1975}
Donald~A. Martin, \emph{Borel determinacy}, Annals of Mathematics \textbf{102}
  (1975), no.~2, 363--371.

\bibitem{Martin1985}
\bysame, \emph{A purely inductive proof of {B}orel determinacy}, Recursion
  theory (Ithaca, N.Y., 1982), Proc. Sympos. Pure Math., vol.~42, Amer. Math.
  Soc., Providence, RI, 1985, pp.~303--308.

\bibitem{MoschovakisBook}
Yiannis~N. Moschovakis, \emph{Descriptive set theory}, Mathematical Surveys and
  Monographs, American Mathematical Society; 2nd edition, 2009.

\end{thebibliography}

\end{document}